\newtheorem{theorem}{Theorem}
\newtheorem{lemma}{Lemma}
\newtheorem{corollary}{Corollary}
\numberwithin{equation}{section}
\newcommand{\R}{{\mathbb{R}}}
\newcommand{\mF}{{\mathcal{F}}}
\newcommand{\gG}{{\mathit{\Gamma}}}
\newcommand{\Bd}{\mathrm{bd\,}}
\newcommand{\Cl}{\mathrm{cl\,}}
\newcommand{\Dim}{\mathrm{dim\,}}
\newcommand{\Int}{\mathrm{int\,}}
\newcommand{\Rbd}{\mathrm{rbd\,}}
\newcommand{\Rec}{\mathrm{rec\,}}
\newcommand{\Rint}{\mathrm{rint\,}}
\begin{document}

\begin{center}

{\Large \textbf{Convex Quadric Surfaces}}

\bigskip

Valeriu Soltan

\medskip

Department of Mathematical Sciences, George Mason University

Fairfax, Virginia 22030, USA

e-mail: vsoltan@gmu.edu

\bigskip

\begin{quote}

\noindent \textbf{Abstract.} We describe convex quadric surfaces
in $\R^n$ and characterize them as convex surfaces with quadric
sections by a continuous family of hyperplanes.

\smallskip

\noindent \textit{AMS Subject Classification:} 52A20.

\smallskip

\noindent \textit{Keywords:} Convex surface, quadric surface,
quadric curve, planar section.

\end{quote}

\end{center}

\section{Introduction and main results}

\bigskip

\noindent Characterizations of ellipses and ellipsoids among
convex bodes in the plane or in space became an established topic
of convex geometry on the turn of twenty's century. Comprehensive
surveys on various characteristic properties of ellipsoids in the
Euclidean space $\R^n$ are given in~\cite{g-h76} and~\cite{pet83}
(see also~\cite{h-m93}). Similar characterizations of unbounded
convex quadrics, like paraboloids, sheets of elliptic hyperboloids
or elliptic cones, are given by a short list of sporadic results
(see, e.\,g., \cite{ait74,ale39,sol08,sol09}). Furthermore, even a
classification of convex quadrics in $\R^n$ is not established
(although it is used in \cite{sol08,sol09} without proof). Our
goal here is to describe convex quadrics in $\R^n$ and to provide
a characteristic property of these surfaces in terms of hyperplane
sections.

We recall that a \textit{quadric surface} (or a \textit{second
degree surface}) in $\R^n$, $n \ge 2$, is the locus of points $x =
(\xi_1, \ldots, \xi_n)$ that satisfy a quadratic equation
\begin{equation} \label{quadric-surface1}
\sum_{i,k=1}^n a_{ik} \xi_i \xi_k + 2 \sum_{i=1}^n b_i \xi_i + c =
0,
\end{equation}
where not all $a_{ik}$ are zero. A \textit{convex surface} is the
boundary of an $n$-dimensional convex set distinct from $\R^n$. In
particular, a hyperplane and a pair of parallel hyperplanes are
convex surfaces. We say that a convex surface $S \subset \R^n$ is
a \textit{convex quadric} provided there is a real quadric surface
$Q \subset \R^n$ and a convex component $U$ of $\R^n \setminus Q$
such that $S$ is the boundary of $U$. The following theorem plays
a key role in the description of convex quadrics.

\begin{theorem} \label{quadric-complement-conv}
The complement of a real quadric surface $Q \subset \R^n$, $n \ge
2$, is the union of four or fewer open sets; at least one of these
sets is convex if and only if the canonical form of $Q$ is given
by one of the equations
\begin{align*}
& a_1 \xi_1^2 + \cdots + a_k \xi_k^2 = 1, \ \ 1 \le k \le n,
\phantom{aaaaaaaaaaaaaaaaa} \\
& a_1 \xi_1^2 - a_2 \xi_2^2 - \cdots - a_k \xi_k^2 = 1, \ \ 2 \le
k \le
n,  \\
& a_1 \xi_1^2 = 0, \\
& a_1 \xi_1^2 - a_2 \xi_2^2 - \cdots - a_k \xi_k^2 = 0, \ \ 2 \le
k \le
n, \\
& a_1 \xi_1^2 + \cdots + a_{k-1} \xi_{k-1}^2 = \xi_k, \ \ 2 \le k
\le n,
\end{align*}
where all scalars $a_i$ involved are positive.
\end{theorem}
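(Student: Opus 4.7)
The plan is to reduce \eqref{quadric-surface1} to a canonical form by an affine change of coordinates and then inspect the resulting complement case by case. By orthogonally diagonalizing the symmetric matrix $(a_{ik})$, completing the square on each coordinate whose eigenvalue is nonzero, and, if a linear term survives on the kernel of $(a_{ik})$, rotating and translating inside that kernel, one brings $Q$ into either a \emph{central} shape
\begin{equation*}
\sum_{i=1}^{p} a_i\xi_i^2 - \sum_{j=p+1}^{k} a_j\xi_j^2 = \varepsilon, \qquad \varepsilon\in\{-1,0,+1\},
\end{equation*}
or a \emph{parabolic} shape
\begin{equation*}
\sum_{i=1}^{p} a_i\xi_i^2 - \sum_{j=p+1}^{k-1} a_j\xi_j^2 = \xi_k,
\end{equation*}
with all $a_i>0$, $0\le p\le k$, and $1\le k\le n$; the central case with $\varepsilon=-1$ reduces to $\varepsilon=+1$ after a sign flip and relabelling, or gives an empty $Q$ which is excluded. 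Since affine maps preserve both the number and the convexity of the complementary components, the theorem reduces to inspecting this finite list.

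For the ``if'' direction I would identify a convex component in each of the five listed equations. Form~1 (central, $\varepsilon=1$, $p=k$) has the strictly convex inside $\{f<1\}$, giving two components or three mutually convex ones (a slab and two half-spaces) when $k=1$. Form~2 (central, $\varepsilon=1$, $p=1$) has $\{f>1\}$ split into the two convex sheets $\{\pm\xi_1>\sqrt{a_1^{-1}(1+a_2\xi_2^2+\cdots+a_k\xi_k^2)}\}$, each a strict epigraph of a convex function, since $\eta\mapsto\sqrt{1+r(\eta)}$ is convex for every positive semidefinite quadratic $r$. Form~3 is the hyperplane, with two convex half-spaces. Form~4 (central, $\varepsilon=0$, $p=1$) gives four convex wedges when $k=2$, and for $k\ge 3$ the convex nappes $\{\pm\xi_1>\sqrt{a_1^{-1}(a_2\xi_2^2+\cdots+a_k\xi_k^2)}\}$ together with a non-convex exterior. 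Form~5 (parabolic, $p=k-1$) yields the strict epigraph $\{\xi_k>a_1\xi_1^2+\cdots+a_{k-1}\xi_{k-1}^2\}$ of a convex quadratic. A direct count in each case confirms the four-component bound, attained only by the pair of crossed hyperplanes.

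The main obstacle is the ``only if'' direction: showing that every canonical shape outside the list admits no convex complementary component. The shapes to rule out are the one-sheeted hyperboloids and their cylinders (central, $\varepsilon=1$, $2\le p\le k-1$), the mixed-signature cones (central, $\varepsilon=0$, $2\le p\le k-2$), the degenerate linear subspaces of codimension at least two (central, $\varepsilon=0$, $p=k\ge 2$), and the hyperbolic paraboloids (parabolic, $2\le p\le k-2$). I plan to dispose of each by exhibiting, inside every connected component of $\R^n\setminus Q$, two points whose joining segment meets $Q$. For the one-sheeted hyperboloid with $f=\sum_{i\le p}a_i\xi_i^2-\sum_{j>p}a_j\xi_j^2$, the antipodal pair $\pm Re_1/\sqrt{a_1}$ (with $R>1$) lies in $\{f>1\}$ while its midpoint, the origin, satisfies $f(0)=0<1$, so $\{f>1\}$ is not convex; the component $\{f<1\}$ is handled by a segment whose endpoints combine a positive- and a negative-subspace direction so that $f$ equals zero at each endpoint but rises above $1$ at the midpoint. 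For the degenerate linear subspace, $\pm e_1$ lie in the complement while the midpoint $0$ lies in $Q$. Entirely analogous segments dispose of the mixed-signature cones and the hyperbolic paraboloids, completing the characterization.
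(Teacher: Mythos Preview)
Your approach is sound and takes a genuinely different route from the paper. The paper proceeds by an inductive \emph{revolution} argument: Lemma~3 realizes each of $A_n$, $B_{k,n}$, $D_{k,n}$ as the revolution of a quadric of the same type one dimension lower, and Lemma~2 shows that revolving a symmetric set about a subspace preserves convexity; the component count and the convexity/non-convexity of each component then propagate from the explicit cases $n=2,3$, while $E_{k,n}$ is handled directly via the Hessian. You instead work directly in every dimension: for each listed canonical form you exhibit a convex component, and for each excluded form you produce a witness segment whose endpoints lie in a given component but whose midpoint falls on the other side of $Q$. Your method is more elementary and avoids the induction, at the price of having to verify, case by case, that the sets $\{f>1\}$, $\{f<1\}$, $\{f>0\}$, $\{f<0\}$ are actually connected (so that your witness pair lies in a single component); you use this implicitly but should state it. The paper's revolution device, by contrast, delivers the component count and the symmetry simultaneously and gives a uniform geometric picture.

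Two small repairs. First, your range ``parabolic, $2\le p\le k-2$'' misses the classical saddle $a_1\xi_1^2-a_2\xi_2^2=\xi_3$: for $k=3$ the only mixed case is $p=1$, and the symmetry $p\leftrightarrow k-1-p$ fixes it, so the correct range is $1\le p\le k-2$. Second, your description of the witness for non-convexity of $\{f<1\}$ in the one-sheeted case is too vague; make it explicit, e.g.\ the pair $(R/\sqrt{a_1})\,e_1\pm(R/\sqrt{a_{p+1}})\,e_{p+1}$ with $R>1$, whose midpoint $(R/\sqrt{a_1})\,e_1$ has $f=R^2>1$, together with the observation that both endpoints are joined to the origin by the radial segment on which $f\equiv 0$, so they lie in the same component.
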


\begin{corollary} \label{convex-quadrics}
A convex surface $S \subset \R^n$, $n \ge 2$, is a convex quadric
if and only if there are Cartesian coordinates $\xi_1, \dots,
\xi_n$ such that $S$ can be expressed by one of the equations
\begin{align*}
& a_1 \xi_1^2 + \cdots + a_k \xi_k^2 = 1, \ \ 1 \le k \le n,
\phantom{aaaaaaaaaaaaaaaaa} \\
& a_1 \xi_1^2 - a_2 \xi_2^2 - \cdots - a_k \xi_k^2 = 1, \
\xi_1 \ge 0, \ \ 2 \le k \le n, \\
& a_1 \xi_1^2 = 0, \\
& a_1 \xi_1^2 - a_2 \xi_2^2 - \cdots - a_k \xi_k^2 = 0, \ \xi_1
\ge 0, \ \
2 \le k \le n, \\
& a_1 \xi_1^2 + \cdots + a_{k-1} \xi_{k-1}^2 = \xi_k, \ \ 2 \le k
\le n,
\end{align*}
where all scalars $a_i$ involved are positive.
\end{corollary}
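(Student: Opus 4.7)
The plan is to deduce Corollary~\ref{convex-quadrics} from Theorem~\ref{quadric-complement-conv} by inspecting each canonical form of $Q$ individually. A convex quadric $S$ is, by definition, the boundary of a convex open component $U$ of $\R^n\setminus Q$ for some real quadric $Q$; Theorem~\ref{quadric-complement-conv} guarantees that in suitable Cartesian coordinates $Q$ is given by one of the five canonical equations listed there. The task therefore reduces to going through those five forms, identifying the convex components of their complements, and reading off the boundary equations, then translating or relabeling coordinates as needed to match the five families in the corollary.

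For the forward implication I would handle the cases in order. The ellipsoid $\sum_{i=1}^k a_i\xi_i^2=1$ with $k\ge 2$ bounds a unique convex component (its interior), whose boundary is $Q$ itself and fits family 1; the degenerate case $k=1$ is a pair of parallel hyperplanes, whose three convex complementary components give family 1 again (the middle slab, with boundary $a_1\xi_1^2=1$) and, after a coordinate translation, family 3 (each outer half-space, with boundary $a_1\xi_1^2=0$). The two-sheet hyperboloid $a_1\xi_1^2-\sum_{i\ge 2}a_i\xi_i^2=1$ has two convex components, each bounded by one sheet; reflecting in $\{\xi_1=0\}$ if necessary, we may assume this sheet lies in $\{\xi_1\ge 0\}$, giving family 2. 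The hyperplane $a_1\xi_1^2=0$ yields family 3 directly. The cone $a_1\xi_1^2-\sum_{i\ge 2}a_i\xi_i^2=0$ has two convex components interior to it, one per sign of $\xi_1$, each bounded by a single nappe and matching family 4 (for $k=2$ the cone degenerates into a pair of intersecting hyperplanes and the four resulting convex wedges still fall under family 4 after a coordinate relabeling). Finally, the paraboloid $\sum_{i=1}^{k-1}a_i\xi_i^2=\xi_k$ separates $\R^n$ into two components, only the ``upper'' one $\{\xi_k>\sum a_i\xi_i^2\}$ being convex (a cylinder over the epigraph of a convex function), and its boundary is $Q$ itself, matching family 5.

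The reverse implication is essentially the same list read backwards: each equation in the corollary is the boundary of one of the convex components enumerated above, hence is a convex quadric by definition. The main place where care is needed, and the only genuine case-analysis subtlety, is the degenerate case $k=1$ of family 1 in Theorem~\ref{quadric-complement-conv}: the same quadric yields both family 1 and family 3 of the corollary according to which convex component one selects, and this is why family 3 cannot be merged into family 1. A secondary routine check is that no other components produce further convex pieces; that is, the exterior of an ellipsoid, the region between the two sheets of a hyperboloid, the exterior of a cone, and the underside of a paraboloid are all non-convex, verified in each case by exhibiting two points of the region whose joining segment crosses $Q$.
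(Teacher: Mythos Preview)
Your approach is correct and is exactly what the paper has in mind: the corollary is stated without a separate proof, being treated as an immediate consequence of Theorem~\ref{quadric-complement-conv} together with the explicit identification of the convex components carried out in that theorem's proof. Your case analysis faithfully reproduces that identification and correctly reads off the boundary equations, including the side condition $\xi_1\ge 0$ for the hyperboloid and cone families; the observation that form~1 of Theorem~\ref{quadric-complement-conv} with $k=1$ produces both families~1 and~3 of the corollary (via the inner slab and the outer half-spaces, respectively) is a nice point that the paper leaves entirely to the reader.
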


In what follows, a \textit{plane} of dimension $m$ is a
translation of an $m$-dimensional subspace. We say that a plane
$L$ \textit{properly} intersects an $n$-dimensional convex set $K$
provided $L$ intersects both the boundary $\Bd K$ and the interior
$\Int K$ of $K$.

A well-known result of convex geometry states that the boundary of
a convex body $K \subset \R^n$ is an ellipsoid if and only if
there is a point $p \in \Int K$ such that all sections of $\Bd K$
by 2-dimensional planes through $p$ are ellipses
(see~\cite{a-m-u35,kub14} for $n = 3$
and~\cite[pp.\,91--92]{bus55} for $n \ge 3$). This result is
generalized in \cite{sol08} by showing that the boundary of an
$n$-dimensional closed convex set $K \subset \R^n$ is a convex
quadric if and only if there is a point $p \in \Int K$ such that
all sections of $\Bd K$ by 2-dimensional planes through $p$ are
convex quadric curves. In this regard, we pose the following
problem (solved in~\cite{bur76,hob74} for the case of convex
bodies): \textit{Given an $n$-dimensional closed convex set $K
\subset \R^n$ distinct from $\R^n$, $n \ge 3$, and a point $p \in
\R^n$, is it true that either $\Bd K$ is a convex quadric or $K$
is a convex cone with apex $p$ provided all proper sections of
$\Bd K$ by 2-dimensional planes through $p$ are convex quadric
curves?}

Kubota~\cite{kub14} proved that, given a pair of bounded convex
surfaces in $\R^3$, one being enclosed by the other, if all planar
sections of the biggest surface by planes tangent to the second
surface are ellipses, then the biggest surface is an ellipsoid.
Independently, Bianchi and Gruber~\cite{b-g87} established the
following far-reaching assertion: If $K$ is a convex body in
$\R^n$, $n \ge 3$, and $\delta (u)$ is a continuous real-valued
function on the unit sphere $S^{n-1} \subset \R^n$ such that for
each vector $u \in S^{n-1}$ the hyperplane $H(u) = \{ x \mid x \!
\cdot \! u = \delta(u) \}$ intersects $\Bd K$ along an $(n -
1)$-dimensional ellipsoid, then $\Bd K$ is an ellipsoid. Our
second theorem extends this assertion to the case of
$n$-dimensional closed convex sets.

\begin{theorem} \label{convex-quadric-section}
Let $K \subset \R^n$ be an $n$-dimensional closed convex set
distinct from $\R^n$, $n \ge 3$, and $\delta (u)$ be a continuous
real-valued function on the unit sphere $S^{n-1} \subset \R^n$
such that for each vector $u \in S^{n-1}$ the hyperplane $H(u) =
\{ x \mid x \! \cdot \! u = \delta(u) \}$ either lies in $K$ or
intersects $\Bd K$ along an $(n - 1)$-dimensional convex quadric.
Then $\Bd K$ is a convex quadric.
\end{theorem}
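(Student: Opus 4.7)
The plan is to reduce Theorem~\ref{convex-quadric-section} to the Bianchi--Gruber ellipsoid theorem~\cite{b-g87}, using Corollary~\ref{convex-quadrics} to identify compact hyperplane sections as ellipsoids and then propagating the quadric structure to all of $\Bd K$ via the algebraic rigidity of quadric surfaces.

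First I would dispose of the degenerate branch. If $H(u_0) \subset K$ for some $u_0 \in S^{n-1}$, then $\Rec K$ contains the subspace $u_0^{\perp}$; combined with $\Dim K = n$ and $K \neq \R^n$, this forces $K$ to be a closed half-space or a slab between two parallel hyperplanes. Both cases appear in Corollary~\ref{convex-quadrics} as the degenerate forms $a_1 \xi_1^2 = 0$ and $a_1 \xi_1^2 = 1$. Hence I may assume that every $H(u)$ meets $\Bd K$ in a genuine $(n-1)$-dimensional convex quadric.

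The main step is to locate $u_0 \in S^{n-1}$ with $H(u_0) \cap K$ compact; this is possible whenever $u_0^{\perp} \cap \Rec K = \{0\}$, a condition that can be satisfied when the lineality subspace of $\Rec K$ has dimension at most $1$. Then $H(u_0) \cap \Bd K$ is a compact convex quadric in $H(u_0)$, hence an ellipsoid by Corollary~\ref{convex-quadrics}. The condition $u^{\perp} \cap \Rec K = \{0\}$ is open in $u$ and continuity of $\delta$ preserves the section type, so an open neighborhood $V \subset S^{n-1}$ of $u_0$ yields only ellipsoidal sections. A localization of the Bianchi--Gruber argument~\cite{b-g87} applied to the family $\{H(u) \cap K : u \in V\}$ of compact ellipsoidal bodies should then pin a relatively open subset of $\Bd K$ onto a single algebraic quadric $Q$. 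Since any two $(n-1)$-dimensional algebraic quadrics that agree on an open set coincide, any other hyperplane section $H(u) \cap \Bd K$ meeting the identified piece in a relatively open subset must also lie on $Q$; a connectedness argument on $S^{n-1}$ extends this to $\Bd K \subset Q$, and Theorem~\ref{quadric-complement-conv} then identifies $\Bd K$ with one of the listed convex quadric forms.

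The principal obstacle is the remaining case in which the lineality subspace of $\Rec K$ has dimension at least $2$, so no hyperplane section of $K$ is compact and no ellipsoidal patch is directly accessible. Here I would descend in dimension: pick a nonzero $v$ in the lineality subspace, quotient $\R^n$ by the line $\R v$, and work with the $(n-1)$-dimensional closed convex image $\widetilde K$. The challenge is to verify that the induced family of hyperplane sections of $\Bd \widetilde K$ remains parameterized by a continuous function on $S^{n-2}$ and still produces $(n-2)$-dimensional convex quadrics, so that an induction on $n$ (with Bianchi--Gruber handling the bounded base) disposes of this case. Ensuring that the quotient inherits the section hypothesis cleanly, and that the convex quadric obtained in $\R^{n-1}$ lifts to a cylindrical convex quadric whose boundary is $\Bd K$, is the most delicate point I anticipate.
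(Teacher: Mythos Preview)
Your high-level architecture (find an ellipsoidal patch, pin it to an algebraic quadric, then propagate) matches the paper's, but several load-bearing steps are either missing or misstated.

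\medskip
\textbf{The lineality split is off and over-engineered.} You claim that $u_0^\perp \cap \Rec K = \{0\}$ can be arranged whenever the lineality space has dimension at most $1$. This is false: if $\Rec K$ strictly contains its lineality line (e.g.\ $\Rec K = \R e_1 + \R_{\ge 0} e_2$), every hyperplane through the origin meets $\Rec K$ in a ray, so no section $H(u)\cap K$ is bounded. The correct dichotomy is simply \emph{line-free versus not}. The paper disposes of the ``not'' branch in one stroke (its Lemma~\ref{section1}): if $K$ contains a line parallel to $u_0$, then $K = \langle u_0\rangle \oplus (K\cap H(u_0))$, the hypothesis forces $\Bd K \cap H(u_0)$ to be an $(n-1)$-dimensional convex quadric, and $\Bd K$ is a cylinder over it. Your quotient-and-induct plan for lineality $\ge 2$ is therefore unnecessary, and the delicate verification you flag never arises.

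\medskip
\textbf{The cone case is absent.} Once $K$ is line-free, the paper still has to separate the strictly convex, regular case from the rest. If $\Bd K$ contains a segment or a singular point, the sections $E(u)$ through that feature are sheets of elliptic cones, and a short geometric argument (Lemma~\ref{section3}) shows $K$ itself is an elliptic cone. Your outline never isolates this case; your ``localized Bianchi--Gruber'' and the subsequent propagation both tacitly assume smoothness and strict convexity (e.g.\ a unique tangent hyperplane at the point where you try to extend the quadric patch).

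\medskip
\textbf{``A localization of Bianchi--Gruber'' is the whole problem.} Bianchi--Gruber uses the full sphere of directions; extracting a quadric patch from only a neighbourhood $V\subset S^{n-1}$ of ellipsoidal sections is exactly the hard content, and you have not indicated how to do it. The paper does \emph{not} localize their argument. Instead it (i) finds two ellipsoidal sections $E_1,E_2$ whose intersection is an $(n-2)$-ellipsoid (Lemma~\ref{section5}), (ii) observes that $E_1\cup E_2$ together with any further point $v\notin H_1\cup H_2$ lies on a one-parameter pencil of quadrics and hence on some explicit $Q$ (Lemma~\ref{section7}), and (iii) shows, by an inductive argument that invokes Bianchi--Gruber only for the genuinely compact case, that enough nearby sections are ellipsoids determined by their intersection with $E_1\cup E_2\cup\{v\}$, hence lie in $Q$ (Lemma~\ref{section8}). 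This pencil-of-quadrics trick is the missing idea in your outline.

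\medskip
\textbf{Propagation needs a quantitative input.} Your connectedness argument (``any section meeting the patch in an open set lies on $Q$, then sweep over $S^{n-1}$'') does not by itself reach boundary points of the patch, and the sections $H(u)$ through such a point may be tangent to the patch rather than cut it in an open arc. The paper handles this with a uniform inscribed-ball radius $\rho$ for all sections (Lemma~\ref{section4}), which guarantees that near any boundary point of the patch one can still thread enough sections through the interior of the patch to force them into $Q$ (Lemma~\ref{section6}). Without a lemma of this type the extension step is incomplete.
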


In what follows, the origin of $\R^n$ is denoted $o$. We say that
a plane $L$ \textit{supports} a closed convex set $K$ provided $L$
intersects $K$ such that $L \cap \Int K = \varnothing$. The
\textit{recession cone} of $K$ is defined by
\[
\Rec K = \{ y \in \R^n \mid x + \alpha y \in K \ \hbox{for all} \
x \in K \ \mathrm{and} \ \alpha \ge 0\}.
\]
It is well-known that $\Rec K \ne \{o\}$ if and only if $K$ is
unbounded; $K$ is called \textit{line-free} if it contains no
line. Finally, $\Rint M$ and $\Rbd M$ denote the relative interior
and the relative boundary of a convex set $M \subset \R^n$.

\section{Auxiliary Lemmas}

The proof of Theorem~\ref{quadric-complement-conv} uses a
description of certain quadric surfaces in $\R^n$ as consecutive
re\-vo\-lutions of lower-dimensional quadrics. To describe these
revolutions, choose any subspaces $L_1, L_2$, and $L_3$ of $\R^n$
such that $L_1 \subset L_2 \subset L_3$ and
\[
\Dim L_1 = m - 1, \ \ \Dim L_2 = m, \ \ \Dim L_3 = m + 1, \ \ 2
\le m \le n - 1.
\]
Let $M$ be the 2-dimensional subspace of $L_3$ orthogonal to
$L_1$. Given a point $y \in L_2$, put $M_y = y + M$ and denote by
$z$ the point of intersection of $L_1$ and $M_y$ (obviously, $z$
is the orthogonal projection of $y$ on $L_1$). Let $C_y$ be the
circumference in $M_y$ with center $z$ and radius $\| y - z \|$.
We say that a set $X \subset L_3$ is the \textit{revolution} of a
set $Y \subset L_2$ about $L_1$ within $L_3$ provided $X = \cup \{
C_y \mid y \in Y \}$. A set $Z \subset \R^n$ is called
\textit{symmetric} about a subspace $N \subset \R^n$ provided for
any point $x \in Z$ and its orthogonal projection $u$ on $N$, the
point $2 u - x$ lies in $Z$. In these terms, we formulate three
lemmas, the first one being obvious.

\begin{lemma} \label{symm-component}
If a set $Y \subset L_2$ is symmetric about $L_1$ and $X$ is the
revolution of\, $Y$ about $L_1$ within $L_3$, then $X$ is
symmetric about $L_2$ and any component of $X$ is the revolution
of a suitable component of\, $Y$ about $L_1$ within $L_3$. \qed
\end{lemma}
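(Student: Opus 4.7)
The plan is to fix an orthonormal basis of $M$ adapted to $L_2$, express each $C_y$ explicitly, deduce the symmetry by inspection, and handle the components via a cylindrical projection onto $L_1 \times [0,\infty)$.

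Write $M = \operatorname{span}(e_1, e_2)$ with $e_1 \in L_2$ (spanning $L_1^{\perp} \cap L_2$) and $e_2 \perp L_2$. Every $y \in L_2$ then has a unique representation $y = z + s\, e_1$ with $z \in L_1$, $s \in \R$, and
\[
C_y = \{\, z + s\cos\theta\, e_1 + s\sin\theta\, e_2 : 0 \le \theta \le 2\pi \,\}.
\]
Orthogonal reflection of $\R^n$ through $L_2$ fixes $e_1$ and negates $e_2$, hence sends the point of $C_y$ at angle $\theta$ to the point at angle $-\theta$; thus each $C_y$, and therefore $X = \bigcup_{y \in Y} C_y$, is symmetric about $L_2$. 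Note that this half of the lemma does not use the hypothesis on $Y$.

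For the component statement, let $\tau \colon L_2 \to L_2$ be reflection through $L_1$. Since $C_y$ has centre $z \in L_1$ and radius $|s|$, one has $C_y = C_{\tau(y)}$; by hypothesis $\tau(Y) = Y$, so $\tau$ permutes the components of $Y$. Let $W$ be a component of $X$, choose $x \in W$, pick $y_0 \in Y$ with $x \in C_{y_0}$, and let $Y_0$ be the component of $Y$ containing $y_0$. The set $X_0 := \bigcup_{y \in Y_0} C_y$ is the image of the connected set $Y_0 \times [0, 2\pi]$ under the continuous map $(y, \theta) \mapsto z + s\cos\theta\, e_1 + s\sin\theta\, e_2$, so $X_0$ is connected; as $x \in X_0 \subseteq X$, maximality of $W$ forces $X_0 \subseteq W$.

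For the reverse inclusion I will use the continuous projection $q \colon L_3 \to L_1 \times [0, \infty)$, $q(x) = (\pi x, \|x - \pi x\|)$, where $\pi$ is orthogonal projection onto $L_1$. The fibres $q^{-1}(q(y)) = C_y$ are connected, and $q$ identifies exactly the $\tau$-orbits, so the connected set $q(W)$, containing $q(y_0)$, must lie in the connected component of $q(y_0)$ in $q(Y)$, which is $q(Y_0) = q(Y_0 \cup \tau(Y_0))$. Hence
\[
W \subseteq q^{-1}(q(Y_0)) \cap X = \bigcup_{y \in Y_0 \cup \tau(Y_0)} C_y = X_0,
\]
giving $W = X_0$ as required. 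The one subtle point, and the main obstacle, is the justification that $q(Y_0)$ really is the connected component of $q(y_0)$ in $q(Y)$; this is straightforward under mild regularity of $Y$ (it suffices, for instance, that the $\tau$-saturated components of $Y$ be open), and holds in all the intended applications, where $Y$ is a quadric surface or its complement.
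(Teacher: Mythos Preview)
The paper supplies no argument for this lemma: the statement is closed immediately with a \qed. Your write-up therefore goes well beyond what the paper offers, and the overall structure --- explicit coordinates for the symmetry, and a cylindrical projection $q$ for the components --- is sound.

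Your hedge at the end is unnecessary, however: the claim that $q(Y_0)$ is the full component of $q(y_0)$ in $q(Y)$ holds for \emph{every} $\tau$-symmetric $Y$, not only under extra regularity. One clean way to close the gap is to note that $q$ restricted to the closed half-plane $L_2^{+} = \{\, z + s\,e_1 : z \in L_1,\ s \ge 0 \,\}$ is a homeomorphism onto $L_1 \times [0,\infty)$, so it suffices to check that $(Y_0 \cup \tau(Y_0)) \cap L_2^{+}$ is a component of $Y \cap L_2^{+}$. Connectedness follows from the continuous fold map $f \colon L_2 \to L_2^{+}$ given by $f = \mathrm{id}$ on $L_2^{+}$ and $f = \tau$ on $L_2^{-}$, since $f(Y_0) = (Y_0 \cup \tau(Y_0)) \cap L_2^{+}$. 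For maximality, let $C \subseteq Y \cap L_2^{+}$ be connected with $f(Y_0) \subseteq C$. If $C$ meets $L_1$, then $C \cup \tau(C)$ is a connected subset of $Y$ containing $Y_0 \cup \tau(Y_0)$, hence contained in the component $Y_0$; this forces $\tau(Y_0) = Y_0$ and $C \subseteq Y_0 \cap L_2^{+} = f(Y_0)$. If $C$ avoids $L_1$, then so does $f(Y_0)$, whence $Y_0$ (being connected and disjoint from $L_1$) lies entirely in one open half-plane; taking it to be $L_2^{+} \setminus L_1$, one gets $f(Y_0) = Y_0 \subseteq C$, and $C$, as a connected subset of $Y$, must then equal $Y_0$. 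In either case $C = f(Y_0)$, so your reverse inclusion $W \subseteq X_0$ is justified in full generality and the qualifier about ``mild regularity'' can be dropped.
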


In what follows, $\langle e_1, \dots, e_k \rangle$ means the span
of vectors $e_1, \dots, e_k \in \R^n$.

\begin{lemma} \label{symm-convex}
If a set $Y \subset L_2$ is symmetric about $L_1$ and $X$ is the
revolution of\, $Y$ about $L_1$ within $L_3$, then $X$ is a convex
set if and only if\, $Y$ is a convex set.
\end{lemma}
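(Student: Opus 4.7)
The plan is to introduce orthonormal coordinates $\xi_1,\dots,\xi_{m+1}$ adapted to the nested flag, with $L_1 = \langle e_1,\dots,e_{m-1}\rangle$, $L_2 = \langle e_1,\dots,e_m\rangle$, $L_3 = \langle e_1,\dots,e_{m+1}\rangle$, and $M = \langle e_m,e_{m+1}\rangle$. Unpacking the definition of the revolution gives the analytic criterion that a point $x=(\xi_1,\dots,\xi_{m+1}) \in L_3$ lies in $X$ precisely when $(\xi_1,\dots,\xi_{m-1},\sqrt{\xi_m^2+\xi_{m+1}^2})$ lies in $Y$; here the symmetry of $Y$ about $L_1$ is exactly what removes the sign ambiguity in the last coordinate, so that the analytic description of $X$ depends on $(\xi_m,\xi_{m+1})$ only through $\sqrt{\xi_m^2+\xi_{m+1}^2}$.

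The easy direction -- convexity of $X$ forces convexity of $Y$ -- is immediate from the identity $X\cap L_2 = Y$: indeed, each circle $C_y$ meets $L_2$ only at the two points $y$ and $2z-y$, and both lie in $Y$ by the symmetry hypothesis; thus $Y$ is an intersection of two convex sets.

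For the converse, I would suppose $Y$ is convex and fix $x^{(1)}, x^{(2)} \in X$ together with $\lambda \in [0,1]$. Setting $r_i = \sqrt{(\xi_m^{(i)})^2 + (\xi_{m+1}^{(i)})^2}$, the analytic criterion places $y^{(i)} := (\xi_1^{(i)},\dots,\xi_{m-1}^{(i)}, r_i)$ in $Y$. Write $x = (1-\lambda)x^{(1)} + \lambda x^{(2)} = (\xi_1,\dots,\xi_{m+1})$, set $R = (1-\lambda)r_1 + \lambda r_2$ and $r = \sqrt{\xi_m^2+\xi_{m+1}^2}$. Convexity of $Y$ yields $(\xi_1,\dots,\xi_{m-1},R) \in Y$, symmetry of $Y$ yields $(\xi_1,\dots,\xi_{m-1},-R) \in Y$, and Minkowski's inequality applied in the plane $M$ gives $0\le r \le R$. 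A further convex combination of these two points -- using weight $(R+r)/(2R)$ on the $R$-endpoint -- places $(\xi_1,\dots,\xi_{m-1},r)$ in $Y$, and then the analytic criterion applied again gives $x \in X$.

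The step I expect to be the main obstacle is precisely the inequality $r \le R$: the midpoint of $x^{(1)}$ and $x^{(2)}$ does not in general project in $M$ to the midpoint of $y^{(1)}$ and $y^{(2)}$, and Minkowski's inequality can be strict, producing a radius deficit $R-r\ge 0$. Overcoming this is where the symmetry of $Y$ about $L_1$ is indispensable: it guarantees that the entire segment of admissible radii $[-R,R]$ in the slice of $Y$ above $(\xi_1,\dots,\xi_{m-1})$ is available, so any radius in $[0,R]$ (and in particular the smaller value $r$) is still a legitimate ordinate of a point of $Y$.
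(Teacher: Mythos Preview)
Your proof is correct and follows essentially the same route as the paper's: the same adapted coordinates, the same analytic description of $X$ via $\sqrt{\xi_m^2+\xi_{m+1}^2}$, the identity $Y = X\cap L_2$ for the easy direction, and for the converse the triangle (Minkowski) inequality $r\le R$ combined with symmetry and convexity of $Y$ to place $(\xi_1,\dots,\xi_{m-1},r)$ in $Y$. The only cosmetic difference is that you make the interpolating weight $(R+r)/(2R)$ explicit (which needs the trivial remark that $R=0$ forces $r=0$), whereas the paper simply asserts that every $\mu$ with $|\mu|\le R$ yields a point of $Y$.
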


\begin{proof}
Without loss of generality, we may put $L_3 = \R^n$. Choose an
orthonormal basis $e_1, \dots, e_n$ for $\R^n$ such that
\[
L_1 = \langle e_1, \dots, e_{n-2} \rangle \quad \text{and} \quad
L_2 = \langle e_1, \dots, e_{n-1} \rangle.
\]
Clearly, $x = (\xi_1, \dots, \xi_n)$ belongs to $X$ if and only if
there is a point
\[
y = (\xi_1, \dots, \xi_{n-2}, \xi'_{n-1}, 0) \in Y \quad
\text{where} \quad \xi'_{n-1} = \sqrt{ \xi_{n-1}^2 + \xi_n^2}.
\]

If $X$ is convex, then $Y$ is convex due to $Y = X \cap L_2$. Let
$Y$ be convex. Choose any points $a = (\alpha_1, \dots, \alpha_n)$
and $b = (\beta_1, \dots, \beta_n)$ in $X$ and a scalar $\lambda
\in [0, 1]$. We intend to show that $c = (1 - \lambda) a + \lambda
b \in X$. Let
\[
a' = (\alpha_1, \dots, \alpha_{n-2}, \alpha_{n-1}', 0), \quad b' =
(\beta_1, \dots, \beta_{n-2}, \beta_{n-1}', 0),
\]
and
\[
c' = ((1 - \lambda) \alpha_1 + \lambda \beta_1, \dots, (1 -
\lambda) \alpha_{n-2} + \lambda \beta_{n-2}, (1 - \lambda)
\alpha_{n-1}' + \lambda \beta_{n-1}', 0)
\]
be points in $Y$ where
\[
\alpha_{n-1}' = \sqrt{\alpha_{n-1}^2 + \alpha_n^2}, \quad
\text{and} \quad \beta_{n-1}' = \sqrt{\beta_{n-1}^2 + \beta_n^2}.
\]
Then $a', b' \in Y$, and $c' = (1 - \lambda) a' + \lambda b' \in
Y$ due to convexity of $Y$. Because $Y$ is symmetric about $L_1$,
we have
\[
((1 - \lambda) \alpha_1 + \lambda \beta_1, \dots, (1 - \lambda)
\alpha_{n-2} + \lambda \beta_{n-2}, \mu, 0) \in Y
\]
for any scalar $\mu$ with $|\mu| \le (1 - \lambda) \alpha_{n-1}' +
\lambda \beta_{n-1}'$. Let
\[
y = ((1 - \lambda) \alpha_1 + \lambda \beta_1, \dots, (1 -
\lambda) \alpha_{n-2} + \lambda \beta_{n-2}, \rho, 0),
\]
where
\[
\rho = \sqrt{ \big( (1 - \lambda) \alpha_{n-1} + \lambda
\beta_{n-1} \big)^2 + \big( (1 - \lambda) \alpha_n + \lambda
\beta_n \big)^2}.
\]
From $\alpha_{n-1} \beta_{n-1} + \alpha_n \beta_n \le
\alpha_{n-1}' \beta_{n-1}'$,  we obtain $\rho \le (1 - \lambda)
\alpha_{n-1}' + \lambda \beta_{n-1}'$, which gives $y \in Y$.
Clearly, the point
\[
z = ((1 - \lambda) \alpha_1 + \lambda \beta_1, \dots, (1 -
\lambda) \alpha_{n-2} + \lambda \beta_{n-2}, 0, 0)
\]
is the orthogonal projection of $y$ on $L_1$. The equalities $\| c
- z \| = \| y - z \| = \rho$ imply that $c \in C_y \subset X$.
Hence $X$ is convex.
\end{proof}

Let $Q \subset \R^n$ be a real quadric surface. A suitable choice
of Cartesian coordinates transforms \eqref{quadric-surface1} into
one of the following canonical forms
\begin{align*}
A_k \ \, &: \ \xi_1^2 + \cdots + \xi_k^2 = 1, \ 1 \le k \le n,
\\
B_{k,r} &: \ \xi_1^2 + \cdots + \xi_k^2 - \xi_{k+1}^2 - \cdots -
\xi_r^2 = 1, \ 1 \le k < r \le n,  \\
C_k \ \, &: \ \xi_1^2 + \cdots + \xi_k^2 = 0,
\ 1 \le k \le n, \\
D_{k,r} &: \ \xi_1^2 + \cdots + \xi_k^2 -
\xi_{k+1}^2 - \cdots - \xi_r^2 = 0, \ 1 \le k < r \le n,  \\
E_{k,r} &: \ \xi_1^2 + \cdots + \xi_k^2 - \xi_{k+1}^2 - \cdots -
\xi_{r-1}^2 = \xi_r, \ 1 \le k < r \le n.
\end{align*}

\begin{lemma} \label{revolution-cases}
Within $\R^n$, $n \ge 3$,
\begin{itemize}

\item[$1)$] $A_n$ is the revolution of $A_{n-1} \subset
\langle e_1, \dots, e_{n-1} \rangle$ about $\langle e_1, \dots,
e_{n-2} \rangle$,

\item[$2)$] $B_{k,n}$ is the revolution of $B_{k,n-1} \subset
\langle e_1, \dots, e_{n-1} \rangle$ about $\langle e_1, \dots,
e_{n-2} \rangle$, $1 \le k \le n - 2$,

\item[$3)$] $D_{k,n}$ is the revolution of $D_{k,n-1} \subset
\langle e_1, \dots, e_{n-1} \rangle$ about $\langle e_1, \dots,
e_{n-2} \rangle$, $1 \le k \le n - 2$,

\item[$4)$] $B_{k,n}$ is the revolution of $B_{k-1,n-1} \subset
\langle e_2, \dots, e_n \rangle$ about $\langle e_3, \dots, e_n
\rangle$, $2 \le k \le n - 1$,

\item[$5)$] $D_{k,n}$ is the revolution of $D_{k-1,n-1} \subset
\langle e_2, \dots, e_n \rangle$ about $\langle e_3, \dots, e_n
\rangle$, $2 \le k \le n - 1$.

\end{itemize}
\end{lemma}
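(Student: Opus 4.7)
The plan is to verify each of the five assertions by a direct substitution argument based on the explicit coordinate description of a revolution set out just before Lemma~\ref{symm-component}. In every case I take $L_3 = \R^n$ and let $L_1, L_2$ be spanned by the indicated basis vectors; the $2$-dimensional subspace $M \subset L_3$ orthogonal to $L_1$ is then spanned by the two standard basis vectors not appearing in $L_1$, namely $e_{n-1}, e_n$ in parts (1)--(3) and $e_1, e_2$ in parts (4)--(5). In each case, a point $y \in L_2$ has exactly one coordinate $\eta$ outside $L_1$, the projection of $y$ on $L_1$ is obtained by zeroing that coordinate, and the circle $C_y$ consists of all points agreeing with $y$ on the $L_1$-coordinates whose two $M$-coordinates satisfy $u^2 + v^2 = \eta^2$.

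Accordingly, a point $x = (\xi_1, \dots, \xi_n) \in \R^n$ belongs to the revolution of the lower-dimensional quadric $Y$ if and only if there exists $y \in Y$ sharing the $L_1$-coordinates of $x$ whose free coordinate $\eta$ satisfies $\eta^2 = \xi_{n-1}^2 + \xi_n^2$ in parts (1)--(3), respectively $\eta^2 = \xi_1^2 + \xi_2^2$ in parts (4)--(5). Since the canonical forms $A_k$, $B_{k,r}$, $D_{k,r}$ contain $\eta$ only through $\eta^2$, substituting the corresponding two-variable sum of squares into the defining equation of $Y$ immediately produces the defining equation of the claimed higher-dimensional quadric. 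For example, $A_{n-1}: \xi_1^2 + \cdots + \xi_{n-1}^2 = 1$ rewrites as $\xi_1^2 + \cdots + \xi_{n-2}^2 + (\xi_{n-1}^2 + \xi_n^2) = 1$, which is $A_n$; the four remaining cases are mechanically analogous. The converse direction, showing that every point of the target quadric is reached by the revolution, is obtained by choosing $\eta$ to be the non-negative square root of the corresponding two-variable sum of squares and checking that the resulting $y$ lies in $Y$.

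I do not anticipate a genuine obstacle; the lemma is pure index bookkeeping. The only delicate point is matching the sign carried by $\eta^2$ in the lower-dimensional canonical form with the sign demanded of the corresponding variable in the higher-dimensional canonical form. The stated range restrictions accomplish exactly this: the hypothesis $k \le n - 2$ in parts (2) and (3) forces $\xi_{n-1}$ to appear with a negative coefficient in $B_{k,n-1}$ and $D_{k,n-1}$, which is needed to produce the negative coefficients of $\xi_{n-1}^2$ and $\xi_n^2$ in $B_{k,n}$ and $D_{k,n}$; and $2 \le k$ in parts (4) and (5) forces $\xi_2$ to appear with a positive coefficient in $B_{k-1,n-1}$ and $D_{k-1,n-1}$, generating the positive coefficients of $\xi_1^2$ and $\xi_2^2$ in the targets, while $k \le n-1$ preserves the remaining sign pattern. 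With these ranges matched, each of the five verifications collapses to a one-line rewrite.
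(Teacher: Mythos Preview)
Your proposal is correct and follows essentially the same approach as the paper: both arguments verify the set equality by the explicit coordinate substitution $\eta^2 \leftrightarrow \xi_{n-1}^2 + \xi_n^2$ (respectively $\xi_1^2 + \xi_2^2$), with the paper writing out case~1) in full via the points $y = (\xi_1,\dots,\xi_{n-2},\sqrt{\xi_{n-1}^2+\xi_n^2},0)$ and $z = (\xi_1,\dots,\xi_{n-2},0,0)$ and then declaring cases 2)--5) analogous. Your added remarks on why the index ranges $k \le n-2$ and $2 \le k$ are exactly what is needed for the sign patterns to match are a useful clarification not made explicit in the paper.
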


\begin{proof}
$1)$ Given a point $x = (\xi_1, \dots, \xi_n) \in A_n$, put
\begin{align} \label{eqn:revolution-cases}
y = (\xi_1, \dots, \xi_{n-2}, \sqrt{\xi_{n-1}^2 + \xi_n^2}, 0),
\quad z = (\xi_1, \dots, \xi_{n-2}, 0, 0).
\end{align}
Then $y \in A_{n-1} \subset \langle e_1, \dots, e_{n-1} \rangle$
and $z$ is the orthogonal projection of $y$ on $\langle e_1,
\dots, e_{n-2} \rangle$. From
\[
\| x - z \| = \| y - z \| = \sqrt{\xi_{n-1}^2 + \xi_n^2}
\]
we see that $x \in C_y$. So, $A_n$ lies in the revolution of
$A_{n-1}$ about $\langle e_1, \dots, e_{n-2} \rangle$. Conversely,
if $y = (\eta_1, \dots, \eta_{n-1}, 0)$ is a point in $A_{n-1}
\subset \langle e_1, \dots, e_{n-1} \rangle$ and $z = (\eta_1,
\dots, \eta_{n-2}, 0, 0)$ is the orthogonal projection of $y$ on
$\langle e_1, \dots, e_{n-2} \rangle$, then any point $u$ from the
circle $C_y \subset y + \langle e_{n-1}, e_n \rangle$ can be
written as
\[
u = (\eta_1, \dots, \eta_{n-2}, \gamma_{n-1}, \gamma_n), \quad
\text{where} \quad \gamma_{n-1}^2 + \gamma_n^2 = \eta_{n-1}^2.
\]
Clearly,  $u \in A_n$, which shows that $A_n$ contains the
revolution of $A_{n-1}$ about $\langle e_1, \dots, e_{n-2}
\rangle$.

Cases $2)$--$5)$ are considered similarly, where the points $y$
and $z$ are defined, respectively, by (\ref{eqn:revolution-cases})
in cases $2)$ and $3)$, and by
\[
y = (0, \sqrt{\xi_1^2 + \xi_2^2}, \xi_3, \dots, \xi_n), \quad z =
(0, 0, \xi_3, \dots, \xi_n)
\]
in cases $4)$ and $5)$.
\end{proof}

\section{Proof of Theorem~\ref{quadric-complement-conv}}

Let $Q \subset \R^n$ be a real quadric surface. We may suppose
that $Q$ has one of the forms $A_k$, $B_{k,r}$, $C_k$, $D_{k,r}$,
$E_{k,r}$ described above. First, we exclude the trivial cases $Q
= A_1$ (when $Q$ is a pair of parallel hyperplanes) and $Q = C_k$
(when $Q$ is an $(n - k)$-dimensional subspace). Furthermore, we
can reduce the proof to the case when $Q$ is has one of the forms
$A_n, B_{k,n}, D_{k,n}, E_{k,n}$. Indeed, if $k < n$ or $r < n$,
then $Q$ is a both-way unbounded cylinder, that is, it the
Cartesian product of a subspace $\R^k$ (respectively, $\R^r$) and
a quadric $P$ of the same type in the orthogonal complement
$\R^{n-k}$ (respectively, $\R^{n-r}$); clearly, $Q$ satisfies the
conclusion of the theorem if and only if $P$ does.

Our further consideration is organized by induction on $n$. The
cases $n = 2$ and $n = 3$ follow immediately from the well-known
properties of quadric curves and surfaces. Suppose that $n \ge 4$.
Assuming that the conclusion of
Theorem~\ref{quadric-complement-conv} holds for all $m < n$, let
the quadric surface $Q \subset \R^n$ have one of the forms $A_n,
B_{k,n}, D_{k,n}, E_{k,n}$. We consider these forms separately.

\textit{Case}~1. Let $Q = A_n$. By Lemma~\ref{revolution-cases},
$A_n$ can be obtained from
\[
A_2 = \{ (\xi_1, \xi_2) \mid \xi_1^2 + \xi_2^2 = 1 \} \subset
\langle e_1, e_2 \rangle
\]
by consecutive revolutions of $A_i \subset \langle e_1, \dots, e_i
\rangle$ about $\langle e_1, \dots, e_{i-1} \rangle$ within the
subspace $\langle e_1, \dots, e_{i+1} \rangle$, $i = 2, \dots, n -
1$. Since both components of $\langle e_1, e_2 \rangle \setminus
A_2$ are symmetric about the line $\langle e_1 \rangle$,
Lemmas~\ref{symm-component} and~\ref{symm-convex} imply that $\R^n
\setminus A_n$ consists of two components; one of them, given by
$\xi_1^2 + \dots + \xi_n^2 < 1$, is convex.

\textit{Case}~ 2. Let $Q = B_{k,n}$, $1 \le k \le n - 1$. If $k =
1$, then Lemma~\ref{revolution-cases} implies that $B_{1,n}$ can
be obtained from
\[
B_{1,2} = \{ (\xi_1, \xi_2) \mid \xi_1^2 - \xi_2^2 = 1 \} \subset
\langle e_1, e_2 \rangle
\]
by consecutive revolutions of $B_{1,i} \subset \langle e_1, \dots,
e_i \rangle$ about $\langle e_1, \dots, e_{i-1} \rangle$ within
the subspace $\langle e_1, \dots, e_{i+1} \rangle$, $i = 2, \dots,
n - 1$. Since all three components of $\langle e_1, e_2 \rangle
\setminus B_{1,2}$ are symmetric about the line $\langle e_1
\rangle$, Lemmas~\ref{symm-component} and~\ref{symm-convex} imply
that $\R^n \setminus B_{1,n}$ consists of three components; two of
them, given, respectively, by
\[
\xi_1 > \sqrt{ \xi_2^2 + \dots + \xi_n^2 + 1}  \quad \text{and}
\quad \xi_1 < - \sqrt{ \xi_2^2 + \dots + \xi_n^2 + 1},
\]
are convex. If $k \ge 2$, then $B_{k,n}$ can be obtained from
\[
B_{1,2} = \{ (\xi_k, \xi_{k+1}) \mid \xi_k^2 - \xi_{k+1}^2 = 1 \}
\subset \langle e_k, e_{k+1} \rangle
\]
in two steps. First, we obtain $B_{k,k+1} \subset \R^{k+1} =
\langle e_1, \dots, e_{k+1} \rangle$ by consecutive revolutions of
$B_{i,i+1} \subset \langle e_{k+1-i}, e_{k+2-i}, \dots, e_{k+1}
\rangle$ about $\langle e_{k+2-i}, \dots, e_{k+1} \rangle$ within
$\langle e_{k-i}$, $e_{k+1-i}, \dots, e_{k+1} \rangle$, $i = 1, 2,
\dots, k - 1$. The complement of
\[
B_{2,3} = \{ (\xi_{k-1}, \xi_k, \xi_{k+1}) \mid \xi_{k-1}^2 +
\xi_k^2 - \xi_{k+1}^2 = 1 \}
\]
in $\langle e_{k-1}, e_k, e_{k+1} \rangle$, consists of two
components, both symmetric about $\langle e_k, e_{k+1} \rangle$.
Since none of these components is convex,
Lemmas~\ref{symm-component} and~\ref{symm-convex} imply that
$\R^{k+1} \setminus B_{k,k+1}$ consists of two components, both
symmetric about any $k$-dimensional coordinate subspace of
$\R^{k+1}$, but none of them convex.

Second, we obtain $B_{k,n}$ from $B_{k,k+1}$ by consecutive
revolutions of $B_{k,j} \subset \langle e_1, \dots, e_j \rangle$
about $\langle e_1, \dots, e_{j-1} \rangle$ within $\langle e_1,
\dots, e_{j+1} \rangle$, $j = k + 1, \dots, n - 1$. As above,
$\R^n \setminus B_{k,n}$ consists of two components, none of them
convex.

\textit{Case}~3. Let $Q = D_{k,n}$, $1 \le k \le n - 1$. If $k =
1$, then $D_{1,n}$ can be obtained from
\[
D_{1,2} = \{ (\xi_1, \xi_2) \mid \xi_1^2 - \xi_2^2 = 0 \} \subset
\langle e_1, e_2 \rangle
\]
by consecutive revolutions of $D_{1,i} \subset \langle e_1, \dots,
e_i \rangle$ about $\langle e_1, \dots, e_{i-1} \rangle$ within
the subspace $\langle e_1, \dots, e_{i+1} \rangle$, $i = 2, \dots,
n - 1$. The complement of
\[
D_{1,3} = \{ (\xi_1, \xi_2, \xi_3) \mid \xi_1^2 - \xi_2^2 +
\xi_3^2 = 0 \}
\]
in $\langle e_1, e_2, e_3 \rangle$ consists of tree components,
all symmetric about $\langle e_1, e_2 \rangle$. Since two of these
components are convex, Lemmas~\ref{symm-component}
and~\ref{symm-convex} imply that $\R^n \setminus D_{1,n}$ consists
of three components; two of them, given, respectively, by
\[
\xi_1 > \sqrt{ \xi_2^2 + \dots + \xi_n^2} \quad \text{and} \quad
\xi_1 < - \sqrt{ \xi_2^2 + \dots + \xi_n^2},
\]
are convex.

Since the case $k = n - 1$ is reducible to that of $k = 1$ (by
reordering $e_1, e_2, \dots, e_n$ as $e_n, e_{n-1}, \dots, e_1$),
it remains to assume that $2 \le k \le n - 2$. Then $D_{k,n}$ can
be obtained from
\[
D_{2,3} = \{ (\xi_{k-1}, \xi_k, \xi_{k+1}) \mid \xi_{k-1}^2 +
\xi_k^2 - \xi_{k+1}^2 = 0 \} \subset \langle e_{k-1}, e_k, e_{k+1}
\rangle
\]
in two steps. First, we obtain $D_{2,n-k+2} \subset \langle
e_{k-1}, e_k, \dots, e_n \rangle$ by consecutive revolutions of
$D_{2,i} \subset \langle e_{k-1}, e_k, \dots, e_i \rangle$ about
$\langle e_{k-1}, e_k, \dots, e_{i-1} \rangle$ within $\langle
e_{k-1}$, $e_k, \dots, e_{i+1} \rangle$, $i = k + 1, \dots, n -
1$. Clearly, $\langle e_{k-1}, e_k, e_{k+1} \rangle \setminus
D_{2,3}$ consists of three components; two of them,
\[
\xi_{k+1} > \sqrt{ \xi_{k-1}^2 + \xi_k^2} \quad \text{and} \quad
-\xi_{k+1} < \sqrt{ \xi_{k-1}^2 + \xi_k^2},
\]
are convex and symmetric to each other about $\langle e_{k-1}, e_k
\rangle$. Hence $\langle e_{k-1}, e_k, e_{k+1}$, $e_{k+2} \rangle
\setminus D_{3,4}$ consists of two components, none of them
convex. Lemmas~\ref{symm-component} and~\ref{symm-convex} imply
that $\R^{n-k+2} \setminus D_{2,n-k+2}$ consists of two
components, none of them convex.

Next, we obtain $D_{k,n}$ from $D_{2,n-k+2}$ by consecutive
revolutions of the surface $D_{i,n-k+i} \subset \langle e_{k-i+1},
\dots, e_n \rangle$ about $\langle e_{k-i+2}, \dots, e_n \rangle$
within $\langle e_{k-i}, \dots, e_n \rangle$, $i = 2, \dots, k -
1$. As above, $\R^n \setminus D_{k,n}$ consists of two components,
none of them convex.

\textit{Case}~4. Let $Q = E_{k,n}$, $1 \le k \le n - 1$. Clearly,
$E_{k,n}$ is the graph of a real-valued function $\varphi$ on
$\R^{n-1} = \langle e_1, \dots, e_{n-1} \rangle$, given by
\[
\xi_n = \varphi (\xi_1, \dots, \xi_{n-1}) = \xi_1^2 + \dots +
\xi_k^2 - \xi_{k+1}^2 - \dots - \xi_{n-1}^2.
\]
Hence $\R^n \setminus E_{k,n}$ has two components. The Hessian
$\Big( \frac{\partial^2 \varphi}{\partial \xi_i
\partial \xi_j} \Big)$ is a diagonal $n \times n$-matrix,
with 2's on its first $k$ diagonal positions and $-2$'s on the
other $n - k - 1$ diagonal positions. Therefore, $\varphi$ is not
concave, being convex if and only if $k = n - 1$. So, $\R^n
\setminus E_{k,n}$ has a convex component if and only if $k = n -
1$; it is given by $\xi_1^2 + \cdots + \xi_{n-1}^2 < \xi_n$.

\section{Proof of Theorem~\ref{convex-quadric-section}}

Under the assumption of Theorem~\ref{convex-quadric-section}, we
divide the proof into a sequence of lemmas.

\begin{lemma} \label{section1}
If $K$ contains a line, then $\Bd K$ is a both-way unbounded
convex quadric cylinder.
\end{lemma}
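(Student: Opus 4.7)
The plan is to exploit the lineality of $K$ to reduce the problem to a lower-dimensional line-free set, then invoke induction on $n$.

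First I set $L = \Rec K \cap (-\Rec K)$, the lineality subspace of $K$, with $d = \Dim L \ge 1$ since $K$ contains a line. The standard lineality decomposition gives $K = K'' + L$, where $K'' = K \cap L^\perp$ is a line-free closed convex set of dimension $n - d$ in $L^\perp$. Consequently $\Bd K = \Rbd K'' + L$, which immediately exhibits $\Bd K$ as a both-way unbounded cylinder with $d$-dimensional generators parallel to $L$.

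Next I verify that the hypothesis of Theorem \ref{convex-quadric-section} descends to $K''$ in $L^\perp$. For any unit vector $u \in L^\perp$, the hyperplane $H(u)$ is parallel to $L$, so $H(u) \cap \Bd K$ is invariant under translations by $L$; when $H(u) \not\subset K$, the hypothesis gives that this intersection is a convex quadric in $H(u) \cong \R^{n-1}$. By Corollary \ref{convex-quadrics}, an $L$-invariant convex quadric in $\R^{n-1}$ must be a cylinder over a convex quadric of dimension $n - 1 - d$ located in $H(u) \cap L^\perp$, and this lower-dimensional quadric coincides with $\Rbd K'' \cap H(u) \cap L^\perp$. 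Therefore the restriction $\delta' = \delta|_{S^{n-1} \cap L^\perp}$ is a continuous function realizing the hypothesis of Theorem \ref{convex-quadric-section} in dimension $n - d$ for the convex set $K'' \subset L^\perp$.

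I then conclude by induction on $n$. If $n - d \ge 3$, the inductive hypothesis applied to $K''$ yields that $\Rbd K''$ is a convex quadric, and hence $\Bd K = \Rbd K'' + L$ is a convex quadric cylinder. For the base cases $n - d \le 2$, a direct classification closes the argument: when $n - d = 1$, $K''$ is an interval and $\Bd K$ is one or two parallel hyperplanes; when $n - d = 2$, the sections of $\Rbd K''$ by a continuous family of lines in $L^\perp$ are zero-dimensional convex quadrics, and the classical planar characterization of conics forces $\Rbd K''$ to be a convex quadric curve.

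The principal obstacle is the middle step: one must confirm that each cylindrical canonical form listed in Corollary \ref{convex-quadrics} forces the $L^\perp$-cross-section of an $L$-invariant convex quadric to itself be a convex quadric of dimension one less. This is routine once the canonical forms are examined family by family, but the invariance argument requires a careful change of coordinates adapted to the subspace $L$.
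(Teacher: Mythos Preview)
Your argument is substantially more elaborate than needed, and in one place it breaks. The paper's proof is a single step: pick a unit vector $u_0$ parallel to any line in $K$; then $K = \langle u_0 \rangle \oplus (K \cap H(u_0))$, and since $K \ne \R^n$ the hyperplane $H(u_0)$ cannot lie in $K$, so the hypothesis of Theorem~\ref{convex-quadric-section} applied to this \emph{single} hyperplane gives directly that $\Bd K \cap H(u_0)$ is an $(n-1)$-dimensional convex quadric. Hence $\Bd K = \langle u_0 \rangle \oplus (\Bd K \cap H(u_0))$ is a convex quadric cylinder. No induction, no descent to $L^\perp$, no base cases.

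Your route, by contrast, passes to the full lineality space $L$, re-derives the hypothesis for $K'' \subset L^\perp$, and then appeals to Theorem~\ref{convex-quadric-section} in dimension $n-d$. Two problems arise. First, the induction is not available: Lemma~\ref{section1} is one of the opening lemmas in the proof of Theorem~\ref{convex-quadric-section}, and the paper does not organize that proof as a global induction on $n$ at this stage, so invoking the theorem in lower dimension is circular as written. Second, and more seriously, your base case $n-d=2$ is simply false. When $\dim L^\perp = 2$, a hyperplane $H(u)$ with $u \in L^\perp$ meets $L^\perp$ in a line, and $\Rbd K''$ meets that line in (generically) two points; your ``zero-dimensional convex quadric'' is just a pair of points, and there is \emph{no} ``classical planar characterization of conics'' asserting that a convex curve all of whose chord-sections are two-point sets must be a conic --- every strictly convex curve has this property. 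So the $n-d=2$ case collapses.

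The fix is exactly the paper's observation: peel off just one line direction rather than the whole lineality space, and read the conclusion straight from the hypothesis on $H(u_0)$.
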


\begin{proof}
If $l$ is a line in $K$, then $K$ is the direct sum $\langle u_0
\rangle \oplus (K \cap H(u_0))$, where $\langle u_0 \rangle$ is
the 1-dimensional subspace spanned by a unit vector $u_0$ parallel
to $l$. By the assumption, $\Bd K \cap H(u_0)$ is an
$(n-1)$-dimensional convex quadric. Hence $\Bd K = \langle u_0
\rangle \oplus (\Bd K \cap H(u_0))$ is a both-way unbounded convex
quadric cylinder.
\end{proof}

Due to Lemma~\ref{section1}, we may further assume that $K$ is
line-free. Then no hyperplane lies in $K$; so, every hyperplane
$H(u)$, $u \in S^{n-1}$, properly intersects $K$.

\begin{lemma} \label{section2}
For any $(n-2)$-dimensional plane $L$ supporting $K$, there is a
hyperplane $H(u)$, $u \in S^{n-1}$, that contains $L$.
\end{lemma}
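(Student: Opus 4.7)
My plan is to parametrize the hyperplanes through $L$ by a great circle in $S^{n-1}$, reduce the lemma to finding a zero of an associated continuous function on this circle, and use a supporting hyperplane of $K$ containing $L$ to force a sign change.

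Fix a point $x_0 \in L$ and let $L_0$ be the direction subspace of $L$, so $L = x_0 + L_0$. Since $L_0$ has codimension $2$, the orthogonal complement $L_0^\perp$ is two-dimensional, and the unit vectors orthogonal to $L_0$ trace out a great circle $\Sigma = L_0^\perp \cap S^{n-1}$. For $u \in \Sigma$ the hyperplane $H(u)$ is parallel to $L_0$, and it contains $L$ precisely when $\delta(u) = x_0 \cdot u$. The task therefore reduces to finding a zero of the continuous function $g \colon \Sigma \to \R$ defined by $g(u) = \delta(u) - x_0 \cdot u$.

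To produce the sign change I will invoke Hahn--Banach separation of the disjoint convex sets $L$ and $\Int K$: this yields a supporting hyperplane of $K$ that contains $L$, whose unit normal $v$ automatically lies in $\Sigma$ because it annihilates $L_0$. Orienting $v$ so that $K \subset \{ x \mid x \cdot v \le x_0 \cdot v \}$, openness of $\Int K$ upgrades this to strict inequality on $\Int K$. Because $K$ is line-free, no hyperplane lies in $K$, so the hypothesis on $\delta$ forces both $H(v)$ and $H(-v)$ to meet $\Int K$. A point $y \in H(v) \cap \Int K$ gives $\delta(v) = y \cdot v < x_0 \cdot v$, hence $g(v) < 0$; similarly a point $z \in H(-v) \cap \Int K$ satisfies $z \cdot v = -\delta(-v) < x_0 \cdot v$, whence $g(-v) = \delta(-v) + x_0 \cdot v > 0$. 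The intermediate value theorem on the connected circle $\Sigma$ then delivers some $u \in \Sigma$ with $g(u) = 0$, i.e.\ with $L \subset H(u)$.

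The delicate point is the sign analysis of $g$ at $v$ and $-v$: it fuses the Hahn--Banach supporting hyperplane, which only gives a weak one-sided bound for $K$, with the hypothesis that $H(\pm v)$ genuinely meet $\Int K$, which upgrades the bounds to the strict inequalities needed for the intermediate value step. Once these strict inequalities are secured, the remainder of the argument is a routine one-variable continuity computation on the circle.
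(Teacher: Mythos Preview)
Your proof is correct. Both you and the paper work on the same great circle $\Sigma = L_0^\perp \cap S^{n-1}$ and exploit continuity of $\delta$ together with the standing fact that every $H(u)$ meets $\Int K$, but the executions differ.

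The paper projects $K$ orthogonally onto the $2$-plane $P = L_0^\perp$, obtaining a planar convex set $M$ with the projection $v$ of $L$ on its relative boundary, and then argues by contradiction: assuming no line $l(u) = P \cap H(u)$ passes through $v$, it traces the curve $u \mapsto \delta(u)\,u$ along an arc of $\Sigma$ and shows that some $l(u)$ must then miss $\Rint M$, contradicting $H(u) \cap \Int K \ne \varnothing$. This requires a case split $v = o$ versus $v \ne o$ and a somewhat delicate geometric bypass argument. Your approach packages the same obstruction into the scalar function $g(u) = \delta(u) - x_0 \cdot u$ and produces the sign change at the antipodal pair $\pm v$ coming from a separating hyperplane; the intermediate value theorem then finishes immediately. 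Your argument is shorter, avoids the case analysis, and makes the role of the separation step explicit; the paper's version is more pictorial but heavier. Either way the essential input is identical: the assumption that each $H(u)$ properly meets $K$ forces $g$ to take both signs on $\Sigma$.
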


\begin{proof}
Let $P$ be the 2-dimensional subspace orthogonal to $L$ and $\pi$
be the orthogonal projection of $\R^n$ onto $P$. The intersection
$L \cap P$ is a singleton, say $\{ v \}$. The set $M = \pi (K)$ is
convex, $\Rint M = \pi (\Int K)$, and $v \in \Rbd M$. Denote by
$l$ a line in $P$ that supports $M$ at $v$, and let $u_1, u_2$ be
unit vectors in $P$ such that $u_1$ is parallel to $l$ and $u_2$
is orthogonal to $l$, where $v + u_2$ is an outward unit normal to
$M$ at $v$. Without loss of generality, we may suppose that $u_2$
lies between $u_1$ and $-u_1$ according to the clockwise bypass of
$P \cap S^{n-1}$.

Assume, for contradiction, that no line $l(u) = P \cap H(u)$, $u
\in P \cap S^{n-1}$, contains $v$. We consider the cases $v = o$
and $v \ne o$ separately.

If $v = o$, then $\delta(u) \ne 0$ for all $u \in P \cap S^{n-1}$.
If $\delta (u_1) > 0$, then the continuous curve $\delta (u)  u$
with endpoints $\delta(u_1)  u_1$ and $\delta(-u_1) (-u_1)$,
obtained by the clockwise bypass of $P \cap S^{n-1}$, entirely
lies in the closed halfplane of $P$ bounded by $l$ and disjoint
from $\Rint M$. In particular, the line $l(u_2)$ is parallel to
$l$ and disjoint from $\Rint M$, contradicting the assumption
$\Int K \cap H(u_2) \ne \varnothing$. If $\delta(u_1) < 0$, we
similarly obtain a contradiction with $\Int K \cap H(-u_2) \ne
\varnothing$ by considering the counterclockwise bypass of $P \cap
S^{n-1}$ from $u_1$ to $-u_1$.

Let $v \ne o$. Denote by $C$ the circle in $P$ with diameter $[o,
v]$. Clearly, $\delta(u)  u \notin C$ for all $u \in P \cap
S^{n-1}$ due to the assumption that no line $l(u)$ contains $v$.
Considering separately the cases $C \cap l = \{ v \}$ and $C \cap
l \ne \{ v \}$, we obtain, similarly to the case $v = o$ above,
that $\Rint M \cap l(u_2) = \varnothing$ or $\Rint M \cap l(-u_2)
= \varnothing$, which is impossible.
\end{proof}

We recall that an $n$-dimensional closed convex set $K \subset
\R^n$ distinct from $\R^n$ is \textit{strictly convex} if $\Bd K$
contains no line segments; $K$ is called \textit{regular} provided
any point $x \in \Bd K$ belongs to a unique hyperplane supporting
$K$.

\begin{lemma} \label{section3}
If $K$ is neither strictly convex nor regular, then $\Bd K$ is a
sheet of elliptic cone.
\end{lemma}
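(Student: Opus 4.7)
The plan is to convert the two failures (of strict convexity and of regularity) into concrete $(n-2)$-dimensional supporting planes of $K$, push these into members of the family $\{H(u)\}$ via Lemma~\ref{section2}, use the classification in Corollary~\ref{convex-quadrics} to force the resulting sections to be quadric cones sharing a common apex $p$, and finally use the continuity of $\delta$ to propagate the cone structure through all of $\Bd K$, noting at the end that a transverse section is an ellipsoid.

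First I would set up the two supporting $(n-2)$-planes. Non-regularity yields a point $x_0 \in \Bd K$ lying in two distinct supporting hyperplanes $H_1, H_2$ of $K$, so $L_0 = H_1 \cap H_2$ is an $(n-2)$-plane that supports $K$ at $x_0$. Non-strict convexity yields a segment $[a,b] \subset \Bd K$ contained in some supporting hyperplane $H'$ of $K$, and any $(n-2)$-plane $L_1 \subset H'$ through $[a,b]$ again supports $K$. By Lemma~\ref{section2} there exist $u_0, u_1 \in S^{n-1}$ with $L_i \subset H(u_i)$, and since $K$ is line-free (from the reduction following Lemma~\ref{section1}), both hyperplanes properly intersect $K$; so the sections $Q_i = H(u_i) \cap \Bd K$ are $(n-1)$-dimensional convex quadrics, with $Q_0 \supset L_0 \cap K$ and $Q_1 \supset [a,b]$.

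Next I would apply Corollary~\ref{convex-quadrics} inside $H(u_1) \cong \R^{n-1}$. Only the non-strictly-convex entries of that list can contain a segment: a hyperplane, a pair of parallel hyperplanes, a convex cylinder over a lower-dimensional quadric, or a convex quadric cone. Using line-freeness of $K$ together with the extra degeneracy at $x_0$ recorded by $Q_0$ (two supporting hyperplanes meeting in $L_0$), I expect to rule out all possibilities except a quadric cone, producing an apex $p \in \Bd K$. The central propagation step is then: for arbitrary $x \in \Bd K \setminus \{p\}$, an $(n-2)$-plane through $p$ and $x$ supporting $K$ (obtained by intersecting two hyperplanes supporting $K$ at $p$ and chosen to pass through $x$) lies in some $H(u)$ by Lemma~\ref{section2}, and the resulting section is again a convex quadric cone with apex $p$, so the whole ray from $p$ through $x$ lies in $\Bd K$. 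Hence $K$ is a convex cone with apex $p$.

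Finally, since $K$ is line-free and $n$-dimensional, a hyperplane $H(u)$ of the family transverse to the axis of $K$ cuts $K$ in a bounded set, and its boundary slice is a bounded $(n-1)$-dimensional convex quadric, which must be an ellipsoid by Corollary~\ref{convex-quadrics}. Combined with the cone structure, this forces $\Bd K$ to have the form $a_1 \xi_1^2 = a_2 \xi_2^2 + \cdots + a_n \xi_n^2$, $\xi_1 \ge 0$, in suitable Cartesian coordinates, i.e., a sheet of an elliptic cone. The main obstacle lies in the propagation step: ensuring that the apex detected from $Q_1$ coincides with the apex of every cone section produced via Lemma~\ref{section2}, and using the continuity of $\delta$ to guarantee that the family $\{H(u)\}$ supplies enough hyperplanes through $p$ to reach every $x \in \Bd K \setminus \{p\}$ without gaps.
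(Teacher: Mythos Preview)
Your overall architecture matches the paper's: locate one cone section with apex $p$, use Lemma~\ref{section2} to spread the cone structure over $\Bd K$, and finish with a bounded section that must be an ellipsoid. The identification of the first cone section is also easier than you fear: once $K$ is line-free, any convex quadric $Q_1 = \Bd K \cap H(u_1)$ containing a segment must already be a sheet of an elliptic cone, because every other line-free entry in Corollary~\ref{convex-quadrics} is strictly convex. No ``extra degeneracy'' from the non-regular point $x_0$ is needed here; in fact the paper uses non-regularity only to show that it forces non-strict convexity, and then argues entirely from the latter.

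The genuine gap is your propagation step. For an arbitrary $x \in \Bd K \setminus \{p\}$ you propose to intersect two supporting hyperplanes of $K$ at $p$ ``chosen to pass through $x$''. There is no reason such hyperplanes exist: a supporting hyperplane at $p$ need not contain a prescribed boundary point $x$. Worse, even if you produced an $(n-2)$-plane $L = G_1 \cap G_2$ through $p$ with both $G_i$ supporting $K$ at $p$, and then $H(u) \supset L$ via Lemma~\ref{section2}, you have not shown that $p$ is the apex of $\Bd K \cap H(u)$: inside $H(u)$ the two supports collapse, $G_1 \cap H(u) = G_2 \cap H(u) = L$, so they do \emph{not} certify singularity of $p$ for the section. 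The paper avoids this by working with tangent halflines at $p$ rather than with arbitrary boundary points, and by keeping the two supporting hyperplanes independent. One hyperplane $G$ contains the tangent halfline $h$ under consideration (an $(n-2)$-plane in $G$ through $h$ is fed into Lemma~\ref{section2} to obtain $H(u)$); a second hyperplane $G_2$ supports $K$ along a \emph{different} halfline $h_2 \subset \Bd K$ already known from the initial cone section, chosen so that $h \not\subset G_2$. Then $G \cap H(u)$ and $G_2 \cap H(u)$ are distinct $(n-2)$-planes through $p$ supporting $K \cap H(u)$, so $p$ is singular for that section, the section is a cone with apex $p$, and $h \subset \Bd K$. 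Varying $h$ (and the auxiliary $h_2$) then covers all tangent directions at $p$, which is exactly the apex-coincidence issue you flag as an obstacle.
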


\begin{proof}
First, we are going to show that if $K$ is not regular, then $K$
is not strictly convex. Indeed, suppose that $K$ is not regular
and choose a singular point $x \in \Bd K$. Let $G_1$ and $G_2$ be
distinct hyperplanes both supporting $K$ at $x$, and $G$ be a
hyperplane through $G_1 \cap G_2$ supporting $K$ and distinct from
both $G_1$ and $G_2$. Choose in $G$ an $(n-2)$-dimensional plane
$L$ through $x$ which is distinct from $G_1 \cap G_2$. By
Lemma~\ref{section2}, there is a hyperplane $H(u)$, $u \in
S^{n-1}$, containing $L$. Because $H(u)$ meets $\Int K$, the point
$x$ is singular for the $(n-1)$-dimensional convex surface $E(u) =
\Bd K \cap H(u)$. Hence $E(u)$ must be a sheet of elliptic cone.
Choosing a line segment in $E(u)$, we conclude that $K$ is not
strictly convex.

Now, assume that $K$ is not strictly convex and choose a line
segment $[x, z] \subset \Bd K$. By Lemma~\ref{section2}, there is
a hyperplane $H(u_0)$, $u_0 \in S^{n-1}$, containing the line
through $x$ and $z$. Since the $(n-1)$-dimensional convex quadric
$E(u_0) = \Bd K \cap H(u_0)$ is line-free and not strictly convex,
it should be a sheet of elliptic cone. Let $v$ be the apex of
$E(u_0)$. Denote by $h_1$ the halfline $[v, x)$ and choose another
halfline $h_2 = [v, w) \subset E(u_0)$ such that the 2-dimensional
plane through $h_1 \cup h_2$ intersects $\Int K$ (this is possible
since $H(u_0)$ meets $\Int K$). Let $G_2$ be a hyperplane
supporting $K$ with the property $h_2 \subset G_2$. By the above,
$h_1 \not\subset G_2$.

Choose a halfline $h$ with apex $v$ tangent to $K$ and so close to
$h_1$ that $h \not\subset G_2$. Let $G$ be a hyperplane through
$h$ that supports $K$. By Lemma~\ref{section2}, there is a
hyperplane $H(u)$, $u \in S^{n-1}$, that meets $\Int K$ and
contains $h$. Since the section $E(u) = \Bd K \cap H(u)$ is
bounded by both $G$ and $G_2$, the point $v$ is singular for
$E(u)$. As above, $E(u)$ is a sheet of elliptic cone. Hence $h
\subset \Bd K$. Varying $h$ and $h_2$, we obtain by the argument
above that every tangent halfline of $K$ at $v$ lies in $\Bd K$.
This shows that $K$ is a convex cone with apex $v$. Finally,
choose a hyperplane $H(u_1)$, $u_1 \in S^{n-1}$, that properly
intersects $K$ along a bounded set (this is possible since $K$ is
line-free). By the assumption, $\Bd K \cap H(u_1)$ is an
$(n-1)$-dimensional ellipsoid. So, $\Bd K$ is a sheet elliptic
cone with apex $v$ generated by $\Bd K \cap H(u_1)$.
\end{proof}

\begin{lemma} \label{section4}
Let $K$ be strictly convex and regular. There is a scalar $\rho >
0$ with the property that for any vector $u \in S^{n-1}$ and a
point $x \in \Bd K \cap H(u)$ there is a ball $B_\rho(z)$ of
radius $\rho$ centered at a point $z \in H(u)$ such that
$B_\rho(z) \cap H(u) \subset K \cap H(u)$ and $x \in \Bd
B_\rho(z)$.
\end{lemma}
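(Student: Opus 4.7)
The plan is to argue by contradiction via compactness. Suppose no such uniform $\rho>0$ exists; then there are sequences $u_k\in S^{n-1}$ and $x_k\in\Bd K\cap H(u_k)$ for which the largest $(n-1)$-dimensional ball lying in $K\cap H(u_k)$ with center in $H(u_k)$ and touching $\Bd K\cap H(u_k)$ at $x_k$ has radius $\rho_k\downarrow0$. First I would classify the possible sections: since $K$ is strictly convex and regular, each section $K\cap H(u)$ inherits both properties inside $H(u)$, and by hypothesis its relative boundary $E(u):=\Bd K\cap H(u)$ is an $(n-1)$-dimensional convex quadric. By Corollary~\ref{convex-quadrics} only the strictly convex regular types survive: $(n-1)$-dimensional ellipsoids, elliptic paraboloids, and single sheets of elliptic hyperboloids of two sheets. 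For each such $E(u)$ the interior radius at a boundary point is strictly positive, attains its minimum on a bounded central region, and grows to infinity along any recession ray.

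Next I would pass to a subsequence with $u_k\to u_0\in S^{n-1}$. By continuity of $\delta$, the hyperplanes $H(u_k)$ converge to $H(u_0)$ uniformly on compact sets, so $K\cap H(u_k)\to K\cap H(u_0)$ in the Kuratowski sense and the argument splits into a bounded and an unbounded case for $\{x_k\}$. In the bounded case, extract $x_k\to x_0\in\Bd K\cap H(u_0)$; the classification above furnishes a ball $B_{r_0}(z_0)$ with $z_0\in H(u_0)$, $B_{r_0}(z_0)\cap H(u_0)\subset K\cap H(u_0)$, and $x_0\in\Bd B_{r_0}(z_0)$ for some $r_0>0$. Regularity of $K$ makes the outward unit normal to the section inside $H(u)$ depend continuously on the pair $(u,x)\in S^{n-1}\times\Bd K$, so a small perturbation of $B_{r_0}(z_0)$ yields, for large $k$, a ball in $H(u_k)$ of radius at least $r_0/2$ inscribed in $K\cap H(u_k)$ and tangent at $x_k$, contradicting $\rho_k\to0$.

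If instead $\|x_k\|\to\infty$, the sections $K\cap H(u_k)$ are eventually unbounded, so $E(u_k)$ is a paraboloid or hyperboloid sheet. The interior radius on such a quadric is bounded below by its vertex value and increases with the distance to the vertex, so $\rho_k\to0$ would force the vertex radii to tend to $0$. Using that $K$ is line-free with $\Rec K$ a line-free closed convex cone, I would argue that these vertices either stay in a compact region (which reduces to the bounded case already handled) or escape along some asymptotic recession direction, in which case the local shape of $\Bd K$ along that direction, combined with strict convexity, delivers a uniform positive lower bound on the vertex radius. The main obstacle is precisely this unbounded case: the type of section may degenerate in the limit (an ellipsoid elongating into a paraboloid, or a paraboloid flattening toward a hyperplane), and the interior radius is not manifestly continuous across such transitions. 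Controlling it uniformly through these transitions, via the recession cone structure together with strict convexity, is the technical heart of the proof.
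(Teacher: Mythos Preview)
Your overall strategy---classify the sections as ellipsoids, paraboloids, or hyperboloid sheets; note that for the unbounded types the inscribed-ball radius is minimized at the vertex; then argue by contradiction and compactness that a sequence with vertex radii tending to zero would produce a limiting section that is a sheet of a cone, contradicting strict convexity---is exactly the skeleton of the paper's argument. Where you diverge is in the handling of the sub-case you yourself flag as the obstacle: the possibility that the vertices of the sections $E(u_k)$ escape to infinity. Your sketch for this (``the local shape of $\Bd K$ along that direction, combined with strict convexity, delivers a uniform positive lower bound'') is not an argument, and it is not clear how to make it one: nothing in strict convexity alone prevents the curvature of an unbounded quadric section from degenerating as its vertex drifts off.

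The paper closes this gap not by analyzing the escaping case but by \emph{precluding} it. Since $\Delta=\{\delta(u)\,u\mid u\in S^{n-1}\}$ is compact and $K$ is line-free, one can choose a closed halfspace $V$ with boundary hyperplane $G$ so far from $\Delta$ that (i) $K_1=K\cap V$ is a convex body, (ii) every section $E(u)$ has its vertex in $\Int V$, and (iii) every unbounded $E(u)$ meets the relative interior of $K\cap G$. Condition (ii) traps all vertices in the compact set $K_1$ from the outset, so only your ``vertices stay in a compact region'' sub-case survives; then the limiting section $E(u_0)$ would have $r(u_0)=0$, forcing it to be a cone sheet, which is impossible. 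For the bounded-$K$ case the paper also proceeds a bit differently from your perturbation argument: it fixes a ball $B_\mu(x)\subset\Int K$, forms for each $u$ the cone from a support point of $K$ on a hyperplane parallel to $H(u)$ through $B_\mu(x)$, and uses its slice by $H(u)$ to plant a ball of uniform radius inside $K\cap H(u)$; combined with the uniform diameter bound on $E(u)$ this yields $\rho$ directly, with no limiting procedure.
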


\begin{proof}
Since $\delta (u)$ is continuous on $S^{n-1}$, the set $\Delta =
\{ \delta(u)  u \mid u \in S^{n-1} \}$ is compact.

Case 1. Assume that $K$ is bounded and denote by $d$ the diameter
of $K$. By a compactness argument, there is a scalar $\gamma > 0$
with the following property: if $G(u)$ is a hyperplane parallel to
$H(u)$ and supporting $K$, then the distance between $G(u)$ and
$H(u)$ is at least $\gamma$. Choose a point $x \in \Int K$ and a
closed ball $B_\mu (x) \subset K$ of radius $\mu > 0$. Fix a
vector $u \in S^{n-1}$ and consider the $(n-1)$-dimensional
ellipsoid $E(u) = \Bd K \cap H(u)$. Let $G(u)$ be one of the
hyperplanes parallel to $H(u)$ and supporting $K$ such that $x$
lies either in $H(u)$ or between $H(u)$ and $G(u)$. Choose a point
$v \in K \cap G(u)$ and denote by $C$ the convex cone with apex
$v$ generated by $B_\mu (x)$. Clearly, $F(u) = C \cap H(u)$ is an
$(n-1)$-dimensional ellipsoid that lies in $E(u)$. Because $\| x -
v \| \le d$ and the distance between $H(u)$ and $G(u)$ is at least
$\gamma$, there is a scalar $\varepsilon > 0$, not depending on
$u$, such that $F(u)$ contains an $(n-1)$-dimensional ball
$B_\varepsilon (w) \cap H(u)$, with $w \in H(u)$. Since the
diameter of $E(u)$ is less than or equal to $d$, there is a scalar
$\rho > 0$ that satisfies the condition of the lemma.

Case 2. Assume that $K$ is unbounded. Because $\Delta$ is compact,
we can choose a closed halfspace $V$ whose boundary hyperplane $G$
is so far from $\Delta$ that the following conditions are
satisfied:
\begin{itemize}

\item[a)] $K_1 = K \cap V$ is a convex body and $\Delta \subset
\Int V$,

\item[b)] any $(n-1)$-dimensional convex quadric $E(u) = \Bd K
\cap H(u)$, $u \in S^{n-1}$, has a vertex in $\Int V$,

\item[c)] any unbounded $(n-1)$-dimensional convex quadric $E(u) =
\Bd K \cap H(u)$, $u \in S^{n-1}$, intersects the relative
interior of $K \cap G$.

\end{itemize}

Fix a vector $u \in S^{n-1}$ and consider the $(n-1)$-dimensional
convex quadric $E(u) = \Bd K \cap H(u)$. If $E(u)$ is an
ellipsoid, then we proceed similarly to Case 1, by choosing a ball
$B_{\varepsilon_1} (z)$ of radius $\varepsilon_1 > 0$ centered at
a point $z \in H(u)$ such that $B_{\varepsilon_1} (z) \cap H(u)
\subset K_1 \cap H(u)$. So, the lemma holds provided $E(u)$ is
bounded.

Now suppose that $E(u)$ is unbounded. By condition b), the vertex
of $E(u)$ lies in $\Int K_1$. Denote by $r(u)$ the radius of the
largest ball $B_{r(u)} (z)$, $z \in H(u)$, such that $B_{r(u)} (z)
\cap H(u) \subset K \cap H(u)$ and the vertex of $E(u)$ belongs to
$\Bd B_{r(u)} (z)$. Clearly, any other point $x \in E(u)$ lies in
the relative boundary of an $(n - 1)$-dimensional ball from $K
\cap H(u)$ of radius $r(u)$. Assume, for contradiction, that the
conclusion of Lemma~\ref{section4} does not hold. Then we can find
a sequence of unit vectors $u_1, u_2, \dots \in S^{n-1}$ such that
the unbounded convex quadrics $E(u_1), E(u_2), \dots$ satisfy the
condition $r(u_k) \to 0$ as $k \to \infty$. Since the vertices of
$E(u_k)$ belong to $K_1$, we conclude, by a compactness argument,
the existence of a subsequence $E(u_{k_1}), E(u_{k_2}), \dots$,
that converges to a convex quadric $E(u)$ with $r(u) = 0$, which
is a sheet of elliptic cone. The last is impossible since $K$ is
strictly convex.
\end{proof}

\begin{lemma} \label{section5}
Let $K$ be strictly convex and regular. There are hyperplanes
$H(u_1)$ and $H(u_2)$, $u_1, u_2 \in S^{n-1}$, such that both
sections $\Bd K \cap H(u_1)$ and $\Bd K \cap H(u_2)$ are
$(n-1)$-dimensional ellipsoids whose intersection is an
$(n-2)$-dimensional ellipsoid.
\end{lemma}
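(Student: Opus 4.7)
The plan is to produce two ellipsoidal sections in $U := \{u \in S^{n-1} : E(u) \text{ is an ellipsoid}\}$ whose defining hyperplanes cross a common interior point of $K$, so that the $(n-2)$-plane of intersection cuts each ellipsoid properly.

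First I would show $U$ is nonempty and open. Since $K$ is line-free, $\Rec K$ is a pointed closed convex cone, and the open convex cone $W := \{u : \langle u, y\rangle > 0 \text{ for all } y \in \Rec K \setminus \{o\}\}$ is nonempty. Any $u_1 \in W \cap S^{n-1}$ satisfies $\Rec K \cap u_1^\perp = \{o\}$, forcing $K \cap H(u_1)$ to be bounded; by strict convexity $E(u_1)$ contains no line segments, and a bounded convex quadric without line segments is an $(n-1)$-dimensional ellipsoid. The identity $U = \{u \in S^{n-1} : \Rec K \cap u^\perp = \{o\}\}$ exhibits $U$ as the complement of a closed set, hence open.

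Next, I would fix such a $u_1 \in U$ and pick a point $p \in \Rint(K \cap H(u_1)) \subset \Int K$; Lemma~\ref{section4} even allows taking $p$ as the center of an $(n-1)$-ball of radius $\rho$ inside $K \cap H(u_1)$. The central task is to produce $u_2 \in U$ with $u_2 \ne \pm u_1$ and $p \in H(u_2)$, for this is equivalent to locating a second zero of the continuous function $g(u) := \delta(u) - \langle u, p\rangle$ on $S^{n-1}$, inside the open set $U \setminus \{-u_1\}$. I would attempt this by perturbation: parametrize $u(\theta, v) = \cos\theta\, u_1 + \sin\theta\, v$ for $v \in u_1^\perp \cap S^{n-1}$, small $\theta > 0$, and observe that the distance in $H(u_1)$ from $p$ to $L(u) := H(u_1) \cap H(u)$ equals $|g(u)|/\sin\theta$; so the condition $L(u) \cap \Rint(K \cap H(u_1)) \neq \varnothing$ amounts to $|g(u(\theta,v))| < r \sin\theta$ where $r$ is the distance from $p$ to $\Rbd(K \cap H(u_1))$ along direction $v$. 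An intermediate-value argument along a great circle through $u_1$, together with the freedom to adjust $p$ within the open set $\Int K \cap H(u_1)$ to avoid the pathology $g \equiv \text{const. sign}$ near $u_1$, then yields the required $u_2$. This is the main obstacle: for a merely continuous $\delta$, one cannot exclude \emph{a priori} that $u_1$ is an isolated zero of $g$ with constant sign nearby, and the uniform fatness furnished by Lemma~\ref{section4} together with genericity in the choice of $p$ is what ultimately prevents this degeneracy.

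Granted such $u_1$, $u_2$, and $p$, the conclusion is immediate. The affine set $L = H(u_1) \cap H(u_2)$ has dimension $n-2$ (because $u_2 \ne \pm u_1$) and contains $p \in \Int K$, so $L$ meets $\Int K$. Viewed inside $H(u_i)$, $L$ is a hyperplane passing through $\Rint(K \cap H(u_i))$, hence $L \cap E(u_i)$ is an $(n-2)$-dimensional ellipsoid for $i = 1,2$. Both ellipsoids coincide with $\Bd K \cap L = E(u_1) \cap E(u_2)$, which is the desired $(n-2)$-dimensional ellipsoid.
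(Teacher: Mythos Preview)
Your opening and closing paragraphs are fine: the description of the set $U$ of directions yielding ellipsoidal sections via $\Rec K\cap u^{\perp}=\{o\}$ is correct, and once $u_1,u_2\in U$ with $H(u_1)\cap H(u_2)$ meeting $\Int K$ are in hand, the conclusion follows exactly as you wrote.

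The gap is precisely where you flag it. Your argument for producing $u_2$ is local: you look only at $u(\theta,v)$ with small $\theta$ and hope that $|g(u(\theta,v))|/\sin\theta$ can be pushed below $r$ by adjusting $p$. But $\delta$ is merely continuous, so the difference quotient $(\delta(u(\theta,v))-\delta(u_1))/\sin\theta$ need not stay bounded as $\theta\to 0$; since changing $p$ within $H(u_1)$ shifts $g(u(\theta,v))/\sin\theta$ only by the bounded quantity $\langle v,q-p\rangle$, this cannot be absorbed in general. Neither Lemma~\ref{section4} (which controls the \emph{shape} of $E(u)$, not the position of $H(u)$) nor ``genericity of $p$'' supplies the missing control. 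So as written the proposal does not establish the existence of $u_2$; you yourself concede this is the main obstacle, and it is not resolved.

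The paper avoids the local trap by a global, two--dimensional reduction. It picks a $2$--plane $P$ on which the projection $M=\pi(K)$ is line-free, and studies the lines $l(u)=P\cap H(u)$ for $u$ running over the full circle $P\cap S^{n-1}$. Starting from a bounded chord $l(u_0)\cap M=[v,w]$ it argues by an intermediate--value/rotation argument: either some $l(u)$ crosses the open segment $]v,w[$ (and we are done), or the continuously moving lines $l(u)$ must sweep the two unbounded boundary arcs of $M$ issuing from $v$ and $w$, forcing two of them to have overlapping chords in $M$. Either way one obtains $u_3,u_4$ with $H(u_3)\cap H(u_4)$ meeting $\Int K$. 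The point is that the circle parameter is used globally, so no differentiability of $\delta$ is needed; your perturbative setup never exploits this.
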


\begin{proof}
Since $K$ is line-free, there is a 2-dimensional subspace $P$ such
that the orthogonal projection, $M$, of $K$ onto $P$ is a
line-free closed convex set (see, e.g.,~\cite{sol06}). Denote by
$\mathcal{F}$ the family of lines $l(u) = P \cap H(u)$, $u \in P
\cap S^{n-1}$, such that $l(u) \cap M$ is bounded. Let $l(u_0)$ be
one of these lines. Put $[v, w] = l(u_0) \cap M$. Clearly,
$l(u_0)$ cuts $M$ into 2-dimensional closed convex subsets, $M'$
and $M''$, at least one of them, say $M'$, being compact. If there
is a line $l(u) \in \mathcal{F}$ which intersects the open line
segment $]v, w[$, then the hyperplanes $H(u)$ and $H(u_0)$ have
the desired property. Assume that no line $l(u) \in \mathcal{F}$
intersects $]v, w[$. Then no line $l(u) \in \mathcal{F}$
intersects $\Rint M'$. Indeed, if a line $l(u_1) \in \mathcal{F}$
intersected $\Rint M'$, then, continuously rotating $u \in P \cap
S^{n-1}$ from the initial position $u_1$, we would find a line
$l(u_2)$ supporting $M$ at $v$ or at $w$ (which is impossible
since $\Int K \cap H(u_2) \ne \varnothing$). This argument shows
that $M''$ is unbounded, since otherwise we repeat the
consideration above for $M''$.

Rotating $u \in P \cap S^{n-1}$ counterclockwise from the initial
position $u_0$, we observe that the lines $l(u) \in \mathcal{F}$
cover the whole unbounded branch of $\Rbd M''$ with endpoint $v$.
Similarly, the lines $l(u) \in \mathcal{F}$ cover the second
unbounded branch of $\Rbd M''$, with endpoint $w$. This implies
the existence of lines $l(u_3), l(u_4) \in \mathcal{F}$ such that
the line segments $l(u_3) \cap K$ and $l(u_4) \cap K$ have a
common interior point. Clearly, the respective ellipsoids $\Bd K
\cap H(u_3)$ and $\Bd K \cap H(u_4)$ satisfy the conclusion of the
lemma.
\end{proof}

\begin{lemma} \label{section6}
Let $K$ be strictly convex and regular. If $\Bd K$ contains an
open piece of real quadric surface, then $\Bd K$ is a convex
quadric.
\end{lemma}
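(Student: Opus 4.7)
The plan is to show that $\Bd K$ is entirely contained in the real quadric surface $Q$ on which the given open piece $V \subset \Bd K$ lies. Once this is established, $\Int K$ becomes a convex open subset of $\R^n \setminus Q$, hence coincides with one of the convex components catalogued in Theorem~\ref{quadric-complement-conv}, and $\Bd K$ is a convex quadric by definition. Without loss of generality I take $V$ connected and $(n-1)$-dimensional, and $Q$ to be a quadric surface minimal with respect to containing $V$. Since $K$ is strictly convex, $V$ cannot lie in any hyperplane, so the defining equation of $Q$ is genuinely quadratic along $V$.

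The heart of the argument is a local rigidity step for hyperplane sections. I would first prove: if $u \in S^{n-1}$ is such that $H(u)$ is transverse to $V$ and meets it in an $(n-2)$-dimensional smooth submanifold $M = H(u) \cap V$, then $E(u) = \Bd K \cap H(u)$ lies in $Q$. Indeed, the convex quadric $E(u) \subset H(u) \cong \R^{n-1}$ is the zero set of some quadratic polynomial $P_1$, while the trace $Q \cap H(u)$ is the zero set of the restriction $P_2 = P_Q|_{H(u)}$, and both $P_1$ and $P_2$ vanish on the codimension-one piece $M$. Strict convexity forbids $M$ from containing any line segment, so $M$ cannot be contained in a hyperplane of $H(u)$; hence $M$ is an open piece of an \emph{irreducible} quadric hypersurface, whose Zariski closure is all of $H(u) \cap Q$. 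Two quadratic polynomials on $\R^{n-1}$ both vanishing on an irreducible quadric hypersurface must be proportional, so the zero sets of $P_1$ and $P_2$ coincide and $E(u) \subset Q \cap H(u) \subset Q$.

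A connectedness argument closes the proof. Let $X \subset \Bd K$ denote the set of points possessing a relative neighborhood in $\Bd K$ contained in $Q$. Then $X$ is open in $\Bd K$ and $X \supset V$. To see that $X$ is also relatively closed, fix $x_0 \in \Bd K$ lying in $\Cl X$; since $Q$ is closed, $x_0 \in Q$. Pick a hyperplane $H(u_0) \ni x_0$ whose intersection with $X$ contains an $(n-2)$-dimensional smooth piece, so that the rigidity step delivers $E(u_0) \subset Q$. By Lemma~\ref{section4} and continuity of $\delta$, the nearby sections $E(u)$ with $u$ close to $u_0$ vary continuously and jointly sweep out a relative neighborhood of $x_0$ in $\Bd K$; each such $E(u)$ continues to meet $X$ in an $(n-2)$-dimensional piece, so the rigidity step again yields $E(u) \subset Q$. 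Therefore $x_0 \in X$, and connectedness of $\Bd K$ forces $X = \Bd K$, i.e., $\Bd K \subset Q$.

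The principal obstacle is the rigidity step, specifically the assertion that two quadratic polynomials sharing a codimension-one zero set are proportional. The potential failure mode is a reducible slice, where $Q \cap H(u)$ factors into a pair of hyperplanes; here I would rely on strict convexity (which forbids any open piece of $\Bd K$, and hence $M$, from lying in a hyperplane) together with genericity of $u$ to guarantee that the transverse intersection $H(u) \cap V$ inherits the nondegeneracy of $V$ and is carried by a single irreducible quadric of $H(u)$.
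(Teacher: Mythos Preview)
Your overall strategy matches the paper's: show $\Bd K \subset Q$ by a connectedness argument, propagating the inclusion outward via hyperplane sections whose intersection with the known piece is large enough to pin them down inside $Q$. Your algebraic formulation of the rigidity step (irreducibility forcing proportionality of the two quadratic forms) is a more explicit version of what the paper states tersely as ``$E_L(u)$ is the unique convex quadric containing $E_L(u)\cap B$''.

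There is, however, a genuine gap in your closure argument. You write ``pick a hyperplane $H(u_0)\ni x_0$ whose intersection with $X$ contains an $(n-2)$-dimensional smooth piece'', and later ``the nearby sections $E(u)$ \dots\ jointly sweep out a relative neighborhood of $x_0$''. But the hyperplanes $H(u)$ are \emph{not} at your disposal: they are prescribed by $\delta$, and nothing you have said guarantees that any $H(u)$ passes through a given boundary point $x_0$, let alone that perturbing $u$ produces sections covering a neighborhood of $x_0$ in $\Bd K$. The paper supplies exactly this missing ingredient via Lemma~\ref{section2}. At the frontier point $z$ it takes the (unique, by regularity) tangent hyperplane $G$, lets $L$ range over the $(n-2)$-planes in $G$ through $z$ (excluding one exceptional $L_0$), and invokes Lemma~\ref{section2} to produce, for each such $L$, a hyperplane $H_L(u)\supset L$ from the given family. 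These sections all pass through $z$, their tangent $(n-2)$-planes at $z$ exhaust the directions in $G$, and the uniform inscribed-ball radius from Lemma~\ref{section4} then yields density of $\bigcup_L E_L(u)$ in a surface neighborhood of $z$. The careful choice of $z$ on the boundary of a ball whose $\Bd K$-trace lies in $B$ also ensures that each $E_L(u)$ meets $B$ in a genuine $(n-2)$-dimensional arc, which is what feeds your rigidity step. Your sketch identifies the algebraic obstacle correctly but underestimates the geometric one: manufacturing enough admissible sections through the frontier point is where Lemmas~\ref{section2} and~\ref{section4} do the real work.
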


\begin{proof}
Let $A$ be an open piece of real quadric surface $Q \subset \R^n$
which lies in $\Bd K$. We state that $\Bd K \subset Q$. Assume,
for contradiction, that $\Bd K \not\subset Q$, and choose a
maximal (under inclusion) open piece $B$ of $\Bd K \cap Q$ that
contains $A$. Let $U_r(x) \subset \R^n$ be an open ball with
center $x \in B$ and radius $r > 0$ such that $\Bd K \cap U_r(x)
\subset B$. Continuously moving $x$ towards $\Bd K \setminus B$,
we find points $x_0 \in B$ and $z \in \Bd K \setminus B$ with the
property $\Bd K \cap U_r(x_0) \subset B$ and $\| x_0 - z \| = r$.

Let $G$ be the hyperplane through $z$ which supports $K$ ($G$ is
unique since $K$ is regular). Denote by $\mathcal{L}$ the family
of $(n-2)$-dimensional planes $L \subset G$ that contain $z$ and
are distinct from the $(n-2)$-dimensional plane $L_0 \subset G$
tangent to $U_r(x_0) \cap G$ at $z$. By Lemma~\ref{section2}, any
plane $L \in \mathcal{L}$ lies in a respective hyperplane
$H_L(u)$. Due to Lemma~\ref{section4}, there is a scalar $t > 0$
so small that the union of $(n-1)$-dimensional convex quadrics
$E_L(u) = \Bd K \cap H_L(u)$, $L \in \mathcal{L}$, is dense in the
surface $t$-neighborhood $\Bd K \cap U_t(z)$ of $z$. Clearly, each
$E_L(u)$ has a nontrivial strictly convex intersection with $B$.
Since $E_L(u)$ is a unique convex quadric containing $E_L(u) \cap
B$, we conclude that $E_L (u) \subset Q$. By continuity,
\[
\Bd K \cap U_t(z) \subset \Cl (\cup \{ E_L(u) \mid L \in
\mathcal{L} \}) \subset Q.
\]
Hence $\Bd K \cap U_t(z) \subset B$, contrary to the choice of $z
\in \Bd K \setminus B$. Thus $\Bd K \subset Q$. Because $\Int K$
is a convex component of $\R^n \setminus Q$, the surface $\Bd K$
is a convex quadric.
\end{proof}

\begin{lemma} \label{section7}
Let $E_1$ and $E_2$ be $(n-1)$-dimensional ellipsoids in $\R^n$,
$n \ge 3$, which lie, respectively, in hyperplanes $H_1$ and $H_2$
of $\R^n$ such that $E = E_1 \cap E_2$ is an $(n-2)$-dimensional
ellipsoid. For any point $v \in \R^n \setminus (H_1 \cup H_2)$,
there is a quadric surface $Q$ that contains $\{ v \} \cup E_1
\cup E_2$.
\end{lemma}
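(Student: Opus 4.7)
My plan is to construct $Q$ explicitly as a polynomial of degree two by combining natural quadratic extensions of the defining polynomials of $E_1$ and $E_2$, and then absorb the extra point $v$ by moving through a one-parameter pencil of such quadrics.

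Since the affine hull of $E$ has dimension $n-2$, the hyperplanes $H_1$ and $H_2$ must be distinct, so I choose Cartesian coordinates $\xi_1, \dots, \xi_n$ on $\R^n$ with $H_1 = \{\xi_1 = 0\}$ and $H_2 = \{\xi_2 = 0\}$, setting $L := H_1 \cap H_2 = \{\xi_1 = \xi_2 = 0\}$. In these coordinates, $E_1$ is the zero locus in $H_1$ of a quadratic polynomial $p_1(\xi_2, \dots, \xi_n)$, and $E_2$ is the zero locus in $H_2$ of a quadratic polynomial $p_2(\xi_1, \xi_3, \dots, \xi_n)$. Using $E_1 \subset H_1$ and $E_2 \subset H_2$, one checks that $E_1 \cap L = E_2 \cap L = E$; consequently $p_1(0, \xi_3, \dots, \xi_n)$ and $p_2(0, \xi_3, \dots, \xi_n)$ are two quadratic polynomials on $L$ whose common zero set is the nondegenerate bounded quadric $E$. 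Such a defining equation is unique up to a nonzero scalar, so after rescaling $p_2$ I may assume
\[
p_1(0, \xi_3, \dots, \xi_n) \;=\; p_2(0, \xi_3, \dots, \xi_n) \;=:\; q(\xi_3, \dots, \xi_n).
\]

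With this normalization I set
\[
Q_0(\xi) \;=\; p_1(\xi_2, \dots, \xi_n) \;+\; p_2(\xi_1, \xi_3, \dots, \xi_n) \;-\; q(\xi_3, \dots, \xi_n).
\]
A direct substitution yields $Q_0|_{H_1} = p_1 + q - q = p_1$ and $Q_0|_{H_2} = q + p_2 - q = p_2$, so the surface $\{Q_0 = 0\}$ already contains $E_1 \cup E_2$; because $p_1$ has a nonzero quadratic part, so does $Q_0$, and $Q_0$ defines a genuine quadric surface. To accommodate $v$, I pass to the pencil $Q_\alpha(\xi) = Q_0(\xi) + \alpha\, \xi_1 \xi_2$, $\alpha \in \R$. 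The added term vanishes on $H_1 \cup H_2 \supset E_1 \cup E_2$, so every $\{Q_\alpha = 0\}$ still contains $E_1 \cup E_2$. Writing $v = (v_1, \dots, v_n)$, the hypothesis $v \notin H_1 \cup H_2$ gives $v_1 v_2 \ne 0$, so the linear equation $Q_0(v) + \alpha v_1 v_2 = 0$ has a unique solution $\alpha_\star \in \R$, and $Q = \{Q_{\alpha_\star} = 0\}$ is the desired quadric surface through $\{v\} \cup E_1 \cup E_2$.

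The one step that is not a mere substitution is the uniqueness up to scalar of the defining quadratic polynomial of a bounded nondegenerate $(n-2)$-quadric in an $(n-2)$-plane. I would dispatch this by an affine change of coordinates on $L$ reducing $E$ to the unit sphere, after which a short computation shows that any quadratic polynomial vanishing on the sphere must be proportional to $\|y\|^2 - 1$. Everything else is bookkeeping and is uniform in $n \ge 3$.
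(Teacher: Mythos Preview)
Your proof is correct and follows essentially the same approach as the paper: both build a one-parameter pencil of quadrics through $E_1\cup E_2$ by adding multiples of $\xi_1\xi_2$ (which vanishes on $H_1\cup H_2$) to a base quadric, and then solve a single linear equation in the parameter to pass through $v$. The only difference is cosmetic: the paper first normalizes $E$, $E_1$, $E_2$ to explicit round spheres (taking $p_1=\xi_1^2-2\rho_1\xi_1+q$, $p_2=\xi_2^2-2\rho_2\xi_2+q$, $q=\xi_3^2+\cdots+\xi_n^2-1$), whereas you work directly with the defining polynomials $p_1,p_2,q$ and invoke uniqueness of $q$ up to scalar---your version sidesteps the unwritten verification that the simultaneous sphere normalization is actually achievable.
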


\proof Clearly, we can choose Cartesian coordinates in $\R^n$ such
that
\begin{align*}
E &= \{ (0, 0, \xi_3, \dots, \xi_n) \mid \xi_3^2 + \dots + \xi_n^2
= 1 \}, \\
E_1 &= \{ (\xi_1, 0, \xi_3, \dots, \xi_n) \mid (\xi_1 - \rho_1)^2
+ \xi_3^2 + \dots + \xi_n^2 = \rho_1^2 + 1 \}, \\
E_2 &= \{ (0, \xi_2, \xi_3, \dots, \xi_n) \mid (\xi_2 - \rho_2)^2
+ \xi_3^2 + \dots + \xi_n^2 = \rho_2^2 + 1 \},
\end{align*}
where $\rho_1 > 0$ and $\rho_2 > 0$. Then $H_1$ and $H_2$ are
described by the equations $\xi_2 = 0$ and $\xi_1 = 0$,
respectively. Consider the family of quadric surfaces $Q(\mu)
\subset \R^n$ given by
\[
\xi_1^2 + \dots + \xi_n^2 + 2 \mu \xi_1 \xi_2 - 2 \rho_1 \xi_1 - 2
\rho_2 \xi_2 - 1 = 0,
\]
where $\mu$ is a parameter. Obviously, $E_i = H_i \cap Q(\mu)$, $i
= 1, 2$. The point $v = (\nu_1, \dots, \nu_n)$ belongs to $\R^n
\setminus ( H_1 \cup H_2)$ if and only if $\nu_1 \nu_2 \ne 0$.
Then $v \in Q(\mu_0)$ for
\[
\mu_0 = (1 + 2 \rho_1 \nu_1 + 2 \rho_2 \nu_2 - \nu_1^2 - \dots -
\nu_n^2) / (2 \nu_1 \nu_2). \qedhere
\]

\begin{lemma} \label{section8}
If $K$ is strictly convex and regular, then $\Bd K$ contains an
open piece of quadric surface.
\end{lemma}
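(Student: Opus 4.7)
The plan is to combine Lemmas~\ref{section5} and~\ref{section7} to write down an explicit quadric candidate through a substantial portion of $\Bd K$, and then use Lemma~\ref{section2} to force $\Bd K$ to coincide with that quadric near some boundary point by a pencil argument in one lower dimension.

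First, invoke Lemma~\ref{section5} to obtain hyperplanes $H(u_1), H(u_2)$ with $E_1 = \Bd K \cap H(u_1)$ and $E_2 = \Bd K \cap H(u_2)$ ellipsoids whose intersection $E = E_1 \cap E_2$ is an $(n-2)$-dimensional ellipsoid. Choose a point $v \in \Bd K$ close to $E$ but off $H(u_1) \cup H(u_2)$. Lemma~\ref{section7} yields a quadric $Q$ containing $\{v\} \cup E_1 \cup E_2$, and the explicit pencil $Q(\mu)$ appearing in its proof exhausts all quadrics through $E_1 \cup E_2$ (the restriction to each $H(u_i)$ is forced to equal $E_i$, leaving only the coefficient of $\xi_1\xi_2$ free), so $Q$ is uniquely pinned down by $v$ within that one-parameter pencil.

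Second, let $G$ be the unique support hyperplane to $K$ at $v$, which exists by regularity. Any $(n-2)$-plane $L \subset G$ through $v$ is a supporting $(n-2)$-plane, so Lemma~\ref{section2} furnishes a hyperplane $H(u) \supset L$ with $v \in H(u)$ and $E(u) = \Bd K \cap H(u)$ an $(n-1)$-dimensional convex quadric. For $L$ ranging over an open set of such planes (which is available because $v$ is near $E$), the hyperplane $H(u)$ is transverse to both $H(u_1)$ and $H(u_2)$ and meets $E_1, E_2$ in genuine $(n-2)$-dimensional ellipsoids $E_1 \cap H(u), E_2 \cap H(u)$ intersecting in an $(n-3)$-dimensional ellipsoid $E \cap H(u)$. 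Both $E(u)$ and $Q \cap H(u)$ are $(n-1)$-dimensional quadrics in $H(u)$ containing $\{v\} \cup (E_1 \cap H(u)) \cup (E_2 \cap H(u))$. Rerunning the pencil argument of Lemma~\ref{section7} inside $H(u) \cong \R^{n-1}$ (or, when $n=3$, using the classical fact that five points in general position determine a conic) forces $E(u) = Q \cap H(u)$, so $E(u) \subset Q$.

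Third, note that the tangent $(n-2)$-plane to $E(u)$ at $v$ within $H(u)$ coincides with $L$, and $L$ sweeps out all $(n-2)$-planes of $G$ through $v$ as it varies; hence the sections $E(u)$ exit $v$ in every tangent direction to $\Bd K$ at $v$, and their union fills a full neighborhood of $v$ in $\Bd K$. That neighborhood then lies on $Q$, giving the desired open piece of quadric surface.

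The main obstacle is the last step: one must verify that the union of the sections $E(u)$ actually covers an open piece of $\Bd K$ around $v$, rather than only a lower-dimensional subset. This should be handled by combining the regularity of $K$ with the uniform rolling-ball estimate of Lemma~\ref{section4}, which prevents the sections from accumulating along a proper subset of the tangent hyperplane $G$.
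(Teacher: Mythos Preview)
Your outline follows the paper's $n=3$ argument closely: choose the two ellipsoidal sections from Lemma~\ref{section5}, pin down a quadric $Q$ through them and an extra boundary point via Lemma~\ref{section7}, and then use Lemma~\ref{section2} to produce hyperplane sections $E(u)$ through that extra point which are forced onto $Q$ by a ``five points determine a conic'' count. For $n=3$ this is essentially the paper's proof, with only cosmetic differences.

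For $n\ge 4$ your route diverges from the paper's, and the two gaps you should be aware of are real. The paper does \emph{not} attempt the direct argument you sketch; instead it runs an induction on $n$, invoking the Bianchi--Gruber result for the compact case and, in the unbounded case, slicing $K$ by hyperplanes parallel to a fixed $L$ orthogonal to a recession direction, applying the induction hypothesis together with Lemma~\ref{section6} to see that those parallel slices are \emph{ellipsoids}, and only then building the quadric $Q$ from two nearby ellipsoidal slices sharing an $(n-2)$-plane $G$. The open piece is then swept out by slices through $G$ varying continuously in a small angular interval, so continuity and the covering issue are trivial. Your approach wants to skip the induction and work entirely inside the hyperplanes $H(u)$ produced by Lemma~\ref{section2}; that lemma, however, gives bare existence of $H(u)\supset L$ with no continuity in $L$ and no control on how $H(u)$ sits relative to $H(u_1),H(u_2)$. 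Two concrete difficulties follow. First, in your step~2 you need $H(u)\cap H(u_i)$ to meet $\Rint(K\cap H(u_i))$ so that $E_i\cap H(u)$ is a genuine $(n-2)$-ellipsoid, and you need $H(u)\cap H(u_1)\cap H(u_2)$ to meet the relative interior of the $(n-2)$-ellipsoid $E$; neither is automatic from $v$ being close to $E$, because $H(u)$ is handed to you by Lemma~\ref{section2} with no say in its direction. Second, in step~3 the assertion that the $E(u)$'s ``fill a full neighborhood of $v$'' does not follow from the tangent spaces $L$ exhausting all $(n-2)$-planes in $G$ through $v$: a family of $(n-2)$-dimensional leaves through $v$, even hitting every tangent direction, can fail to cover an open set if the family is not continuous (and Lemma~\ref{section2} gives you no continuity). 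Lemma~\ref{section4} bounds curvature of each section from above but does not by itself upgrade a discontinuous family to a covering. The paper sidesteps both issues: for $n=3$ it restricts to a one-parameter family $\mathcal{F}_\alpha$ of planes and only claims two open ``triangular'' pieces rather than a full neighborhood; for $n\ge 4$ the continuous sweep of ellipsoidal slices through $G$ makes the covering obvious.

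In short, your plan is correct for $n=3$ and is the paper's argument there; for $n\ge 4$ it is a genuinely different and more direct strategy, but as written it needs additional control over the hyperplanes $H(u)$ that Lemma~\ref{section2} alone does not provide.
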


\begin{proof}
We proceed by induction on $n \, (\ge 3)$. Let $n = 3$. By
Lemma~\ref{section5}, there are planes $H(u_1)$ and $H(u_2)$ such
that both sections $E_1 = \Bd K \cap H(u_1)$ and $E_2 = \Bd K \cap
H(u_2)$ are ellipses, with precisely two points, say $v$ and $w$,
in common. The set $\Bd K \setminus (E_1 \cup E_2)$ consists of
four open pieces, at least three of them being bounded because $K$
is line-free. We choose any of these pieces if $K$ is bounded, and
choose the piece opposite to the unbounded one if $K$ is
unbounded. Denote by $\gG$ the chosen piece. Let $L$ be a plane
through $[v, w]$ that misses $\gG$ and is distinct from both
$H(u_1)$ and $H(u_2)$. Clearly, there is a neighborhood $\Omega
\subset \Bd K$ of $v$ such that for any point $z \in \gG \cap
\Omega$, the plane $L_z$ through $z$ parallel to $L$ intersects
each of the ellipses $E_1$ and $E_2$ at two distinct points.

Choose a point $z \in \gG \cap \Omega$ and denote by $P_z$ the
plane through $z$ that supports $K$ ($P_z$ is unique since $K$ is
regular), and by $l_z$ the line through $z$ parallel to $[v, w]$.
Let $\mF_\alpha$, $\alpha > 0$, be the family of planes through
$l_z$ forming with $L_z$ an angle of size $\alpha$ or less. By
continuity and Lemma~\ref{section4}, the neighborhood $\Omega$ and
the scalar $\alpha$ can be chosen so small that for any given
plane $M \in \mF_\alpha$, every plane $H(u)$ through the line $M
\cap P_z$ intersects each of the ellipses $E_1$ and $E_2$ at two
distinct points. By the same lemma, we can find a scalar $r > 0$
such that for any plane $H(u)$ trough $z$, the convex quadric
curve $\Bd K \cap H(u)$ intersects the closed curve $\Bd K \cap
S_r(z)$ at two points, where $S_r(z) \subset \R^3$ is the sphere
of radius $r$ centered at $z$.

Due to Lemma~\ref{section7}, there is a quadric surface $Q$
containing $\{ z \} \cup E_1 \cup E_2$. By the above, given a
plane $M \in \mF_\alpha$, every plane $H(u)$ through the line $M
\cap P_z$ intersects $\Bd K$ along an ellipse, which has five
points in $Q$ (namely, $z$ and two on each ellipse $E_i$, $i = 1,
2$). Since an ellipse is uniquely defined by five points in
general position, the ellipse $E(u) = \Bd K \cap H(u)$ lies in $Q$
for any choice of a plane $H(u)$ through the line $M \cap P_z$,
where $M \in \mF_\alpha$. This argument shows the existence of two
open ``triangular'' regions in $\Bd K \cap Q \cap U_r(z)$ which
have a common vertex $z$ and are bounded by a pair of planes $M_1,
M_2 \in \mF_\alpha$ (see the shaded sectors of $\Bd K \cap U_r(z)$
in the figure below). Hence the case $n = 3$ is proved.

Suppose that the inductive statement holds for all $m \le n - 1$,
$n \ge 4$, and let $K \subset \R^n$ be a line-free, strictly
convex and regular closed convex set of dimension $n$ that
satisfies the hypothesis of Theorem~\ref{convex-quadric-section}.
Since the case when $K$ is compact is proved in~\cite{b-g87}, we
may assume that $K$ is unbounded. Then the recession cone $\Rec K$
contains halflines and is line-free. Choose a halfline $h \subset
\Rec K$ with endpoint $o$ such that the $(n-1)$-dimensional
subspace $L \subset \R^n$ orthogonal to $h$ satisfies the
condition $L \cap \Rec K = \{ o \}$. Then any proper section of
$K$ by a hyperplane parallel to $L$ is bounded (see,
e.g.,~\cite{sol06}).

\begin{center}
\begin{picture}(240,110)

\put(101,15){\line(1,4){21}} \put(100.5,90){\line(1,-4){20}}
\put(97,31){\line(2,1){65}} \put(92,75){\line(2,-1){70}}
\put(110,52){\circle{18}} \put(110,52){\circle*{2}}
\put(138.5,51.5){\circle*{2}}

\put(65,50){\vector(1,0){33}} \put(140,85){$K$} \put(137,43){$v$}
\put(35,48){$U_r(z)$} \put(165,32){$E_1$} \put(165,62){$E_2$}
\put(85,9){$M_1$} \put(85,95){$M_2$}

\thicklines \qbezier(150,5)(10,55)(150,105)

\linethickness{0.01mm} \put(109,54){\line(1,0){2}}
\put(109,55){\line(1,0){2}} \put(109,56){\line(1,0){2}}
\put(108.5,57){\line(1,0){3}} \put(108.5,58){\line(1,0){3.5}}
\put(108.5,59){\line(1,0){3.5}} \put(108,60){\line(1,0){4}}
\put(108,61){\line(1,0){4.4}}

\put(109,49){\line(1,0){2}} \put(109,48){\line(1,0){2}}
\put(109,47){\line(1,0){2}} \put(108.5,46){\line(1,0){3}}
\put(108.5,45){\line(1,0){3.5}} \put(108.5,44){\line(1,0){3.5}}
\put(108,43){\line(1,0){4}}

\end{picture}
\end{center}

Because the set $\Delta = \{ \delta(u)  u \mid u \in L \cap
S^{n-1} \}$ is compact, we can choose a hyperplane $L_0$ parallel
to $L$ and properly intersecting $K$ so far from $\Delta$ that
every hyperplane $H(u)$, $u \in L \cap S^{n-1}$, intersects $\Rint
(K \cap L_0)$. Since any section $\Bd K \cap H(u) \cap L_0$, $u
\in L \cap S^{n-1}$, is an $(n-2)$-dimensional convex quadric, $K
\cap L_0$ satisfies the hypothesis of
Theorem~\ref{convex-quadric-section} (with $L_0$ instead of
$\R^n$). By the inductive assumption, $\Rbd (K \cap L_0)$ contains
a relatively open piece of an $(n-1)$-dimensional quadric, and
Lemma~\ref{section6} implies that $\Bd K \cap L_0$ is an
$(n-1)$-dimensional ellipsoid. Let $G \subset L_0$ be an
$(n-2)$-dimensional plane through the center of $K \cap L_0$. By
continuity and the argument above, there is an $\varepsilon > 0$
such that the hyperplanes $L_1$ and $L_2$ through $G$ forming with
$L_0$ an angle of size $\varepsilon$ also intersect $\Bd K$ along
$(n-1)$-dimensional ellipsoids $E_1$ and $E_2$, respectively.
Denote by $N$ the hyperplane through $G$ parallel to $h$, and
choose a point $v \in (\Bd K \cap N) \setminus (L_1 \cup L_2)$ so
close to $L_0$ that the hyperplane $L_0'$ through $v$ parallel to
$L_0$ satisfies the following conditions (see the figure below):
\begin{itemize}

\item[a)] $\Bd K \cap L_0'$ is an $(n-1)$-dimensional ellipsoid,

\item[b)] $L_0'$ intersects the relative interiors of both
$(n-1)$-dimensional solid ellipsoids $K \cap L_1$ and $K \cap
L_2$.

\end{itemize}

\begin{center}
\begin{picture}(220,75)

\put(25,35){\line(1,0){175}} \put(110,35){\circle*{2}}
\put(20,45){\line(1,0){170}} \put(110,45){\circle*{2}}
\put(110,5){\line(0,1){70}} \put(115,50){$v$} \put(10,28){$L_0$}
\put(195,45){$L_0'$} \put(60,65){$K$}

\put(35,16){\line(4,1){141.5}} \put(179.5,17.5){\line(-4,1){141}}
\put(51,10){$E_1$} \put(155,11){$E_2$}  \put(112,8){$N$}

\thicklines \qbezier(35,0)(35,40)(42,75)
\qbezier(180,0)(180,40)(173,75)

\end{picture}
\end{center}

By Lemma~\ref{section7}, there is a real quadric surface $Q$ that
contains $\{ v \} \cup E_1 \cup E_2$. Since the
$(n-1)$-dimensional ellipsoid $E_0' = \Bd K \cap L_0'$ is uniquely
determined by the set $\{ v \} \cup (E_1 \cap L_0') \cup (E_2 \cap
L_0')$, we have $E_0' \subset \Bd K \cap Q$. By continuity, there
is a $\beta > 0$ such that any hyperplane $L'$ through $G$ that
forms with $L_0'$ an angle of size $\beta$ or less satisfies
conditions a) and b) above; whence $\Bd K \cap L'$ is an
$(n-1)$-dimensional ellipsoid that lies in $\Bd K \cap Q$.
Clearly, the union of such ellipsoids $\Bd K \cap L'$ covers an
open piece of $Q$ that lies in $\Bd K$.
\end{proof}




\end{document}